\newcommand{\F}{\mathbb{F}}
\newcommand{\Z}{\mathbb{Z}}
\newcommand{\C}{\mathbb{C}}
\newcommand{\Q}{\mathbb{Q}}
\newcommand{\R}{\mathbb{R}}
\newcommand{\A}{\mathbb{A}}
\newcommand{\p}{\mathfrak{p}}
\newcommand{\q}{\mathfrak{q}}
\newcommand{\Omf}{\mathfrak{O}}
\newcommand{\Aut}{\operatorname{Aut}}
\newcommand{\GL}{\operatorname{GL}}
\newcommand{\SL}{\operatorname{SL}}
\newcommand{\PGL}{\operatorname{PGL}}
\newcommand{\Gal}{\operatorname{Gal}}
\newcommand{\Eva}{\operatorname{G}}
\newcommand{\Imm}{\operatorname{Im}}
\newcommand{\Frob}{\operatorname{Frob}}
\newcommand{\Cl}{\operatorname{Cl}}
\newcommand{\disc}{\operatorname{disc}}
\newcommand{\Hom}{\operatorname{Hom}}
\newtheorem{theorem}{Theorem}
\newtheorem{lemma}{Lemma}
\newtheorem{proposition}{Proposition}
\newtheorem{remark}{Remark}
\newtheorem{acknowledgements}{Acknowledgements}
\begin{document}

\title{Solvable Artin representations ramified at one prime} 

\author{Jonah Leshin}

\maketitle

\begin{abstract}
 We classify the possibilities
  for the fixed field of the kernel of an irreducible three-dimensional Artin representation of $\Q$ with solvable image ramified at one
  prime by using the classification of the finite irreducible
  subgroups of $\PGL_3(\C)$. This allows us to bound the number of such representations
  with given Artin conductor.  

 \end{abstract}


\section{Introduction} 
\label{intro}
The Langlands program gives a correspondence between Galois
representations and automorphic representations. In the case of two-dimensional Artin representations,
the corresponding analytic objects
are modular forms of weight one and Maass forms of weight zero, which
correspond to odd and even two-dimensional Artin representations,
respectively. 
 Specifically, Deligne and Serre \cite{poids1} showed that given a
cuspidal modular form of weight $1$, level $N$, and Nebentypus
character $\epsilon$ with normalized $(a_1=1)$ Fourier expansion
$\sum a_nq^n$, there exists an Artin representation 
\[
\rho:\Eva_{\Q}:=\textrm{Gal}(\bar{\Q}/\Q)\to \GL _2(\C)
\]
of conductor $N$ such that (via $\rho$) the trace of the
conjugacy class of the Frobenius at $p$, $\Frob_p$, in
$\Eva_{\Q}$ is $a_p$, and the determinant of $\Frob_p$ is $\epsilon(p)$. 
Conversely, given a two-dimensional Artin representation $\rho$, if we insist that either
$\rho$ has solvable image \cite{fltbook} or that $\rho$ is odd \cite{khare}, then there is known to exist an automorphic form $f$ such that $f$ gives rise to $\rho$ in
the manner described above.  Using this correspondence, Wong \cite{siman}, building on
work of Duke \cite{duke}, bounded the number of two-dimensional Artin
representations of $\Q$ with Artin conductor $N$ in terms of $N$. 

\par Less is known in the case of three-dimensional Artin
representations. In this paper we bound the number of
three-dimensional irreducible Artin representations with solvable
image and prime power conductor. It is known that such representations come from automorphic
representations of $\GL_3(\A_{\Q})$, where $\A_{\Q}$ denotes
the ad\`eles of $\Q$. We use purely algebraic
techniques for our bounds, which therefore, in principle, could give bounds for the number of 
automorphic representations of $\GL_3(\A_{\Q})$ with
particular properties.
\par The main results of this paper, which are made more precise in
Theorems \ref{t1}, \ref{t2}, \ref{t4}, and \ref{t5} are:
\newline
\begin{itemize}
\item The number of irreducible, imprimitive three-dimensional Artin representations of
$\Eva_{\Q}$ with
conductor dividing $p^m$ is at most $Cmp^{m+1}$, where $C$ is an
explicitly given constant.

\item The number of solvable, irreducible, primitive three-dimensional Artin representations
  of $\Eva_{\Q}$ with conductor dividing $p^m$ is at most
  $Dp^{\frac{m}{3}+50}$, where $D$ is an explicitly given constant.\newline
\end{itemize}

\par The paper is divided into two main parts. The
first part looks at imprimitive representations. We use the fact
that any imprimitive three-dimensional representation is monomial to
bound the number of all imprimitive representations in one fell swoop.
The second part deals with primitive representations, where we do a
case by case analysis of the three groups that can occur as the
projective image of a primitive three-dimensional Artin
representation.

\section{Definitions and preliminaries}
Let $V$ be a finite dimensional vector space over an arbitrary field,
and let $G$ be a group. A representation $\rho:G\to \GL(V)$ is called
\textit{imprimitive} if there exist proper subspaces $V_1, \ldots, V_m$ of $V$
such that $V=\oplus_{i=1}^mV_i$ and $\rho(G)$ permutes the $V_i$. If
such a decomposition of $V$ does not exist, $\rho$ is called
\textit{primitive}. If $\rho$ is imprimitive, and if the corresponding $V_i$
are one-dimensional and permuted transitively by $G$, then $\rho$ is said to be \textit{monomial} (this
is equivalent to the standard definition of monomial, which is that
there exists a subgroup $H\leq G$ and a linear character $\chi$ of $H$
such that $\rho$ is induced from $\chi$). Thus monomial
representations of dimension greater than 1 are imprimitive, and the converse is true if the
dimension of the representation is prime, and if $G$ acts transitively
on the $V_i$. A group is said to be
\textit{monomial} if each of its irreducible representations is
monomial. A comprehensive treatment of these definitions and the general
theory of representations of finite groups can be found in \cite{CR}. 

An Artin representation of a number field $F$ is a continuous
homomorphism 
\[
\rho:\textrm{Gal}(\bar{F}/F)\to \GL_n(\C).
\]
We take the analytic
topology on $\GL_n(\C)$, so Artin representations
necessarily have finite image. The conductor $N(\rho)$ of $\rho$ is
defined to be the product of its local conductors, whose definition we
now recall. Let $\p$ be a prime
of $F$. For the $\p$-part $N_{\p}(\rho)$ of
$N(\rho)$, consider the restriction $\rho_{\p}$ of $\rho$ to
$\Eva(\bar{F}_{\p}/F_{\p})$ (if $E/F$ is an extension of fields, we will write $\Eva(E/F)$ in place of $\Gal(E/F)$ to ease notation). The representation $\rho_{\p}$ factors through a finite
quotient $\Eva(K/F_{\p})$ of $\Eva(\bar{F}_{\p}/F_{\p})$. Let $g_i$ denote the order of the $i$th
ramification group $G_i$ of $\Eva(K/F_{\p})$. We then define
$N_{\p}(\rho)$ to be $\p$ raised to the power

\[
\sum_{i\geq 0}\frac{g_i}{g_0}\textrm{dim}V/V^{G_i},
\]
where $V$ is the representation space for $\rho$ and $V^{G_i}$ is the
subspace of elements fixed by $G_i$. If $\mathfrak{a}$ is an ideal in
the ring of integers $\mathfrak{O}$ of a number field, such as the
conductor of an Artin representation, and if $\p$ is a
prime of $\mathfrak{O}$, we take $v_{\p}(\mathfrak{a})$ to be the
exponent of the highest power of $\p$ dividing $\mathfrak{a}$.  
 For the remainder of this paper we make the following hypothesis on
 all Artin representations:
\[
  \quad
  F=\mathbb{Q} \quad\textrm{and}\quad  n=3.
\]  Representations will always be distinguished up to
isomorphism. The natural projection $\pi:\GL_3(\C) \to \PGL_3(\C)$ will always be denoted
by $\pi$, and a $\textit{lift}$ of a projective
representation $\psi: G \to \PGL_3(\C)$ of a group $G$ is any
representation $\nobreak{\rho: G\to \GL_3(\C)}$ such that $\psi=\pi \circ
\rho$. A cyclic group of order $n$ will be denoted by $C_n$, and
$Z(G)$ denotes the center of a group $G$. If $L/K$ is a Galois
extension of fields, we say $E$ is a $\textit{central extension}$ of $L/K$ if
$E/K$ is Galois and $\Eva(E/L) \leq Z\big(\Eva(E/K)\big)$.
\par The symbol $E_{\rho}$ will denote the fixed field of the
kernel of one of our irreducible three-dimensional Artin
representations $\rho$ with solvable image,
and $F_{\rho}$ will denote the fixed field of the kernel of $\pi \circ
\rho$ (when the context is clear, the subscript $\rho$ may be
dropped). Thus $\rho$ descends to an injection $\Eva(E_{\rho}/\Q)
\hookrightarrow \GL_3(\C)$, which by abuse of notation we also denote
by $\rho$, and $Z\big(\Eva(E/\Q)\big)=\Eva(E/F)$, which is cyclic
since $\rho$ is irreducible (by Schur's Lemma \cite[Section 2.2]{linreps}).
\par We say an Artin representation $\rho$ is $\textit{ramified at }$$ p$ if the extension
$E_{\rho}/\Q$ is ramified at $p$. In sections \ref{Imprim} and \ref{Prim}, all
representations are assumed to be unramified outside $p\infty$,
where $p$ is a finite prime and $\infty$ is the real place of
$\Q$. To avoid special cases, most of which are trivial, we will assume $p\neq 2,3$. \par In our computations, we will wish to bound the number of $C_a$ or $C_a
\times C_a$ extensions of a number field $K$ with given ramification
conditions, where $a$ is prime. We do this by first finding the
$a$-rank of the maximal extension $L$ of $K$ with the given ramification
conditions. Let $r_{a,b}(x)$ be the number of $(C_a)^b$ extensions of
$K$ contained in a Galois extension $L$, where $\Eva(L/K)$ is abelian
and has $a$-rank
$x$. We will consider the cases when $a=2$ or $3$, although the
following formulas hold for any prime $a$: 
\[
r_{a,1}(x)=\frac{a^x-1}{a-1} \textrm{, \quad} r_{a,2}(x)=\frac{(a^x-1)(a^x-a)}{(a^2-1)(a^2-a)}.
\]
When we have an extension $L/K$ of number fields with $L/\Q$
and $K/\Q$ Galois, and $L/\Q$ ramified only at $p$, we will write
$e(L/K)$ to denote the ramification from $K$ to $L$ of the primes above $p$ in $K$. We will say an extension of fields is unramified if it is unramified at all finite primes. The ring of
integers of a number field $K$ will be denoted by $\Omf_K$ and its
absolute discriminant by $d_K$. We let $h_K$ denote the class
number of $K$, $\Cl(K)$ the ideal class group of $K$,
$\Cl^{\mathfrak{m}}(K)$ the ray class group of $K$ of modulus
$\mathfrak{m}$, $H_K$ the Hilbert class field of
$K$, and $K^{\mathfrak{m}}$ the ray class field of $K$ of
modulus $\mathfrak{m}$. If $\mathfrak{m}$ is a product of finite
primes of $K$, we will write $\mathfrak{m}\infty$ to denote the modulus that
is the product of $\mathfrak{m}$ multiplied by all the real places
of $K$.

\section{Imprimitive Representations}\label{Imprim}
In this section we assume that $\rho:\Eva_{\Q}\to \GL_3(\C)$ is
irreducible, imprimitive, and unramified outside $p\infty$. Since $\rho$ is monomial, there exists a field $L$ of
degree 3 over $\Q$ and a character $\chi:\Eva_{L} \to \C^*$ such that
$\rho=\chi_L^{\Q}$, where $\chi_L^{\Q}$ denotes the induced
representation of $\chi$ to $\Eva_{\Q}$. 
\par Suppose first that $L \subset
\Q(\zeta_p)$ and let $\p$ be the prime of $L$ above $p$. It follows
from the Corollary to Proposition 4, Chapter VI of \cite{MR554237} that
\begin{equation}
v_p(N(\rho))=v_{\p}(N(\chi))+2. \label{vp}
\end{equation}
 Let
\[
c_m= \# \{\textrm{Irreducible }\rho:\Eva_{\Q} \to \GL_3(\C)
\textrm{ such that } N(\rho)\mid p^m  \textrm{ and there exists }\chi:
  \Eva_{L} \to \C^* \]
\[\textrm{such
  that } \rho=\chi_L^{\Q}\},\]
\[
d_m=\#\{\chi:\Eva_L\to \C^* \textrm{ such that } N(\chi)\mid \p^m \}.
\]
It follows from Mackey's irreducibility criterion \cite[Section 7.4]{linreps} that the induction $\rho$ to $\Eva_{\Q}$ of a non-trivial $\chi \in \Hom(\Eva_L,
\C^*)$ will be irreducible exactly when $\rho$ restricted to $\Eva_L$
is the sum of three distinct characters, each of which induces to give
$\rho$ by Frobenius reciprocity \cite[Section 7.2]{linreps}. Therefore, using \eqref{vp}, we have 
\[ c_m\leq\frac{d_{m-2}}
{3}.\]
\par We first bound $d_m$. By
class field theory, any $\chi$ with $N(\chi)\mid \p^m$ factors through
$\Eva(L^{\p^m}/L)$, where $L^{\p^m}$ is the ray class field of $L$ of modulus
$\p^m$. Class field theory gives the exact sequence
\begin{equation}
 \Omf_L^* \to (\mathfrak{O}_L /\mathfrak{p}^{m}\infty)^*\to
 \Cl^{\mathfrak{p}^{m}\infty}(L) \to \Cl(L)
  \to 1,  \label{exact}
\end{equation} 
where $(\Omf_L/\p^{m}\infty)^*$ is defined to be $(\Omf_L /\p^m)^* \times
\langle \pm
1\rangle ^3$, the exponent $3$ coming from the 3 real places of
$L$. The image of $\Omf_L^*$ in $(\Omf_L/\p^{m}\infty)^*$ has order at least two, so we obtain
\[
d_m=|\Eva(L^{\p^m \infty}/L)|\leq 4h_Lp^{m-1}(p-1)< 4\left(\frac{1}{2}p\right)p^{m-1}(p-1)=2p^{m}(p-1),
\]
where the first inequality follows from \eqref{exact}, and the second
inequality from \cite[Corollary 4]{realcubic}, for which we need the fact that
$L$ is totally real. Thus we obtain 
\begin{theorem} \label{t1}
Notation as above,
$$ c_m< \frac{2p^{m-1}}{3}.$$
\end{theorem}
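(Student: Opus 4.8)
The plan is to combine the two inequalities already in hand. The Mackey irreducibility criterion together with Frobenius reciprocity has shown that inducing a character $\chi$ produces an irreducible representation exactly when its restriction to $\Eva_L$ splits as three distinct characters, and the conductor relation \eqref{vp} shifts the exponent by two; this yields $c_m \le d_{m-2}/3$. Separately, the class field theory computation built on the exact sequence \eqref{exact}, the archimedean factor $\langle \pm 1\rangle^3$, and the class number bound $h_L < \frac{1}{2}p$ from \cite{realcubic} gives $d_m < 2p^m(p-1)$. With both facts available, the theorem is an immediate consequence, so the remaining work is purely arithmetic bookkeeping.

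Concretely, I would first replace $m$ by $m-2$ in the bound on $d_m$ to obtain $d_{m-2} < 2p^{m-2}(p-1)$. Feeding this into $c_m \le d_{m-2}/3$ gives
\[
c_m \le \frac{d_{m-2}}{3} < \frac{2p^{m-2}(p-1)}{3}.
\]
Finally, expanding $2p^{m-2}(p-1) = 2p^{m-1} - 2p^{m-2} < 2p^{m-1}$ and dividing by $3$ yields the claimed bound $c_m < \frac{2p^{m-1}}{3}$.

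Essentially all of the substantive content of this theorem has already been carried out in establishing the two displayed inequalities, so no genuine obstacle remains at this stage. If there is a delicate point, it lies entirely in the input bound $d_m < 2p^m(p-1)$: it relies on $L$ being the totally real cubic subfield of $\Q(\zeta_p)$, so that the archimedean contribution is exactly $\langle \pm 1\rangle^3$ and the estimate of \cite{realcubic} applies, and on the image of the unit group $\Omf_L^*$ in $(\Omf_L/\p^m\infty)^*$ having order at least two. Granting those, the passage from $d_m$ to $c_m$ is nothing more than the elementary arithmetic above.
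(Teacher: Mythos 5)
Your argument is correct and is exactly the paper's own: the two displayed inequalities $c_m \le d_{m-2}/3$ and $d_m < 2p^m(p-1)$ are combined, and the substitution $m \mapsto m-2$ followed by $2p^{m-2}(p-1) < 2p^{m-1}$ gives the stated bound. The paper treats this final arithmetic as immediate ("Thus we obtain"), and your write-up simply makes that step explicit.
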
 
\par For the remainder of this section, suppose that $L \nsubseteq \Q(\zeta_p)$, or equivalently,
that $L/\Q$ is not Galois. By the following
lemma, $p$ must split in $L$ as $\p_1^2\p_2$.
\begin{lemma}\label{lem3}
Let $L/\Q$ be a degree 3 extension ramified at a single finite prime $p\neq
2,3$. Then $L/\Q$ is Galois if and only if $p$ is totally ramified in $L$. 
\begin{proof}
The only if part is clear. For the converse, suppose $L/\Q$ is totally
ramified at $p$ and let $\p$ be the prime of $L$ above $p$. Let $L_{\p}$ be the completion of $L$ at $\p$ and
$\Omf=\Omf_{L_{\p}}$ the ring of integers in $L_{\p}$. Let $f(x)$ be the
minimal polynomial for $\alpha \in \Omf$, where $\alpha$
generates $\Omf$ as a $\Z_p$-algebra. We may assume $\alpha
\in \Omf_L$. The $p$-part of the
discriminant $\disc(f)$ of $f$ equals the local discriminant
$d_{L_{\p}/\Q_p}$, which has $p$-part
$p^2$ ($p \neq 3$), the same as the $p$-part of the global discriminant
$d_{L/\Q}$. So we can write $\disc(f)=\epsilon p^2D$, with $\epsilon
\in \{\pm 1\}$ and $p \nmid D$. The Galois closure $K$ of $L/\Q$ is
ramified only at $p$ and is given by
$K=L\big(\sqrt{\disc (f)}\big)=L\big(\sqrt{\epsilon D} \big)$. For $p\neq 2$, this is
only possible if $\sqrt{\epsilon D} \in L$, i.e., $L/\Q$ is Galois. 
\end{proof}
\end{lemma}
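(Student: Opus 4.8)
The plan is to dispatch the ``only if'' direction group-theoretically and to treat the converse through the discriminant of $L$. If $L/\Q$ is Galois then $\Eva(L/\Q)\cong C_3$ has no proper nontrivial subgroups, so the inertia group at $p$ is either trivial or all of $\Eva(L/\Q)$; since $p$ ramifies by hypothesis, it must be totally ramified. For the converse I would exploit the standard description of the Galois closure $K$ of a cubic field as $K=L\big(\sqrt{d_L}\big)$, so that $L/\Q$ is Galois precisely when $d_L$ is a perfect square. The whole problem then reduces to computing $d_L$ and deciding whether it is a square.

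First I would compute the discriminant. Because $p\neq 3$, the ramification at $p$ is tame, so if $p$ is totally ramified the local different at the unique prime above $p$ has exponent $e-1=2$; since that prime has residue degree $1$, this gives $v_p(d_L)=2$. As $L/\Q$ is unramified outside $p$, no other prime divides $d_L$, and hence $d_L=\pm p^2$. It remains only to pin down the sign.

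To rule out the minus sign I would argue via the ramification of the Galois closure. The closure $K=L\big(\sqrt{d_L}\big)$ is ramified only at the primes ramifying in $L$, that is, only at $p$. If $d_L=-p^2$ then $\sqrt{d_L}=p\sqrt{-1}$, so $K=L(i)\supseteq\Q(i)$, and $\Q(i)/\Q$ is ramified at $2$; as $p\neq 2$ this contradicts the fact that $K/\Q$ is unramified outside $p$. Therefore $d_L=p^2$ is a perfect square, $\sqrt{d_L}=p\in L$, and $K=L$, so $L/\Q$ is Galois.

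The main obstacle is this final step: a non-Galois cubic is exactly one whose discriminant is a non-square, and the non-square possibility here is the negative sign $d_L=-p^2$, which looks locally indistinguishable from the Galois case. Eliminating it is where the hypotheses are genuinely used, namely $p\neq 3$ for the tame computation $v_p(d_L)=2$ and $p\neq 2$ so that the spurious quadratic extension $\Q(i)$ introduces ramification away from $p$. One should also verify the auxiliary claim that the Galois closure of a field unramified outside $p$ is itself unramified outside $p$; this follows since $K$ embeds in the compositum of the conjugates of $L$, each of which shares the ramification locus of $L$.
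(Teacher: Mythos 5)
Your proof is correct and follows essentially the same route as the paper: both identify the Galois closure as $L(\sqrt{\disc})$, compute that the $p$-part of the discriminant is $p^2$ by tameness ($p\neq 3$), and rule out a genuine quadratic subfield of $K$ because it would be ramified away from $p$. The only cosmetic difference is that you work directly with $d_L=\pm p^2$ and name the offending field $\Q(i)$, whereas the paper works with $\disc(f)=\epsilon p^2 D$ for a local generator's minimal polynomial and argues generically that $\Q(\sqrt{\epsilon D})$ must ramify outside $p$.
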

By the Corollary to Proposition 4 of \cite{MR554237}, we have
\[
N(\chi_L^{\Q})=N_{L/\Q}(N(\chi))d_{L}.
\]
It follows that if $N(\chi)=\p_1^{a_1}\p_2^{a_2}$, then
$N(\rho)=p^{a_1+a_2}p$. Let $c_m=c_{m,L}$ be the number of imprimitive
$\rho$ with conductor dividing $p^m$ that are induced from $\Eva_L$. To estimate $c_m$, we need to estimate
\[
\#\{\chi:N(\chi)\mid \p_1^{a_1}\p_2^{a_2}, a_1+a_2 \leq m-1\}.
\]
Thus we need to bound
$\sum_{a_1+a_2=m-1}|\Eva(L^{\p_1^{a_1}\p_2^{a_2}\infty}/L)|$. We have
\begin{equation}\label{mp}
c_m \leq \sum_{a_1+a_2=m-1}|\Eva(L^{\p_1^{a_1}\p_2^{a_2}\infty}/L)|\leq 4h_L\sum_{a_1+a_2=m-1}(p^{a_1}-1)(p^{a_2}-1)<4h_Lmp^{m-1},
\end{equation}
where the first inequality follows from the exact sequence
\eqref{exact} applied to the situation at hand. 
\par Since $L/\Q$
is not Galois, $\Eva_L$ is not normal in $\Eva_{\Q}$, so it is
possible that there may only be one $\chi \in \Hom(\Eva_L, \C^*)$ that
gives $\rho$ when induced up to $\Eva_{\Q}$. Since we are assuming that
$H:=\Eva(E_{\rho}/L)$ is not normal in $G:=\Eva(E_{\rho}/\Q)$, the
group $H$ has three conjugate subgroups inside $G$. Given a coset $sH$ and
$\chi \in \Hom(H,\C^*)$, we can define the representation $\chi^s \in
\Hom(sHs^{-1},\C^*)$ by $\chi^s(x)=\chi(s^{-1}xs)$. The formula for an
induced character \cite[Section 7.2]{linreps} shows that $\chi^s$
induced up to $G$ is isomorphic to $\chi$ induced up to $G$. The three
conjugate subgroups $H^s$ correspond to conjugates of $L$,
each degree three over $\Q$. Thus, it remains to bound $h_L$ and $\frac{n_p}{3}$, where $n_p$ is the
number of non-Galois fields $L$ of degree 3
over $\Q$, ramified only at $p\infty$. We do the latter first. Let $L$ be such a number field. The
Galois closure $K$ of $L/\Q$ is an $S_3$ extension of $\Q$ ramified only
at $p\infty$. By Lemma \ref{lem3}, $p$ splits as $\p_1^2\p_2$ in $L$. Let
$\q$ be a prime of $K$ above $\p_2$. We have 
\begin{equation}
e(\q/p)=e(\q/\p_2)e(\p_2/p)=e(\q/\p_2)\leq 2. \label{(*)}
\end{equation}
For any primes $\q_i, \q_j$ of $K$ above $p$, $e(\q_i/p)=e(\q_j/p)$,
and the quantity is at least 2, so by \eqref{(*)}, it equals 2. Thus
$K/\Q(\sqrt{p^{*}})$ is unramified, where $\Q(\sqrt{p^{*}})$ is the
subfield of $K$ of degree 2 over $\Q$. Therefore, $L$ and its two
Galois conjugates contained in $K$ combine to give one unramified $C_3$
extension of $\Q(\sqrt{p^{*}})$.
By \cite[page 95]{moonmonomial}, we have
$h_{\Q(\sqrt{p^*})}< 
\frac{22.2p}{\pi^2}$. Ramification at the infinite primes of
$\Q(\sqrt{p^*})$ can only contribute a power of 2 to the degree of the
maximal abelian unramified (at all finite primes) extension of
$\Q(\sqrt{p^*})$. Since we are interested in the $3$-rank of this
extension, we may ignore the potential ramification at the infinite primes. We have the obvious bound that the $3$-rank of
$\Cl\big(\Q(\sqrt{p^*})\big)$ is less than
\[
 \log_3\left({\frac{22.2p}{\pi^2}}\right),
\]
from which it follows that
\[
\frac{n_p}{3} <  r_{3,1}\left(\log_3\left({\frac{22.2p}{\pi^2}}\right)\right),
\]
which is less than
\[
\frac{11.1p}{\pi^2}. 
\]

\par Putting this last bound together with \eqref{mp}, and using the bound
$h_L < \frac{22.2p}{\pi^3}$ (which follows from \cite[page 95]{moonmonomial}),
we obtain
\begin{theorem}\label{t2} The number
of imprimitive $\rho$ with $\rho=\chi_L^{\Q}$, $N(\rho)$ dividing $p^m$, and $L/\Q$ non-Galois is
less than
$$
\left(\frac{985.7mp^{m+1}}{\pi^5}\right). 
$$
\end{theorem}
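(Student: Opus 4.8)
The plan is to assemble three ingredients that are already in hand: the per-field count $c_{m,L} < 4h_L m p^{m-1}$ from \eqref{mp}, the uniform class number estimate $h_L < 22.2p/\pi^3$, and the bound $n_p/3 < 11.1p/\pi^2$ on the number of conjugacy classes of non-Galois cubic fields ramified only at $p\infty$. The theorem is essentially the product of these, so the work is bookkeeping rather than analysis.

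First I would make the bound in \eqref{mp} uniform in $L$ by substituting the class number estimate: for every non-Galois cubic $L$ ramified only at $p\infty$,
\[
c_{m,L} < 4h_L m p^{m-1} < 4\cdot\frac{22.2p}{\pi^3}\cdot m p^{m-1} = \frac{88.8\,m p^{m}}{\pi^3}.
\]
The key step is then to pass from the sum $\sum_L c_{m,L}$, taken over all $n_p$ non-Galois cubic fields, to the number of \emph{distinct} representations. Because $L/\Q$ is non-Galois, $H=\Eva(E_\rho/L)$ has exactly three conjugate subgroups in $G=\Eva(E_\rho/\Q)$, corresponding to the three conjugate fields $L, L^s, L^{s^2}$; and by the discussion following \eqref{mp}, inducing the conjugate character $\chi^s$ from $L^s$ yields a representation isomorphic to $\rho$. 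Hence each imprimitive $\rho$ with non-Galois $L$ is counted at least three times in $\sum_L c_{m,L}$ (once from each conjugate field from which it is induced), so the number of distinct such $\rho$ is at most $\tfrac{1}{3}\sum_L c_{m,L}$. Since $c_{m,L}$ is bounded uniformly, this is at most $\frac{n_p}{3}\cdot\frac{88.8\,m p^{m}}{\pi^3}$; this is exactly why the relevant quantity is $n_p/3$ rather than $n_p$.

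Finally I would multiply by the field count, using $n_p/3 < 11.1p/\pi^2$:
\[
\frac{n_p}{3}\cdot\frac{88.8\,m p^{m}}{\pi^3} < \frac{11.1p}{\pi^2}\cdot\frac{88.8\,m p^{m}}{\pi^3} = \frac{985.68\,m p^{m+1}}{\pi^5} < \frac{985.7\,m p^{m+1}}{\pi^5},
\]
which is the claimed bound, the constant $985.7$ arising as $11.1\cdot 88.8 = 11.1\cdot(4\cdot 22.2)$ rounded up. I do not expect a genuine obstacle, since each analytic input is already established; the only point requiring care is the factor of three coming from the conjugate fields, and getting the powers of $p$ to combine as $p^{1}\cdot p^{1}\cdot p^{m-1}\cdot p^{1}\cdot\!\!$—that is, tracking that the two copies of $p$ from $n_p/3$ and $h_L$ together with the $p^{m-1}$ from \eqref{mp} produce exactly $p^{m+1}$.
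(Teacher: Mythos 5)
Your proposal is correct and follows essentially the same route as the paper: the paper's own proof of Theorem \ref{t2} is precisely the combination of the per-field bound \eqref{mp}, the estimate $h_L < 22.2p/\pi^3$, and the count $n_p/3 < 11.1p/\pi^2$ of conjugacy classes of non-Galois cubics, with the factor of three accounted for exactly as you describe via the conjugate fields. The arithmetic $11.1\cdot 4\cdot 22.2 = 985.68 < 985.7$ and the power count $p\cdot p\cdot p^{m-1} = p^{m+1}$ both check out.
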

\begin{remark}
\emph{
It is possible to obtain a slightly better bound by using a
  slightly better bound for the $3$-rank of
  $\Cl\big(\Q(\sqrt{p^*})\big)$. See, for example, \cite{pierce}.}

\end{remark}

\section{Primitive representations} \label{Prim}
For this section, suppose that $\rho:\Eva_{\Q}\to \GL_3(\C)$ is
primitive. We continue with our assumption that $\rho$ is unramified
outside $p\infty$ and has solvable image. Since $\rho$ is primitive
with solvable image, the image of $\pi \circ \rho$ in $\PGL_3(\C)$
is isomorphic to one of the following groups \cite[Theorem 4.8]{pgl3}:
\[
P_1:=\big(C_3 \times C_3 \big)\rtimes C_4, \quad P_2:= \big(C_3
\times C_3 \big) \rtimes Q_8, \quad
 P_3:=\big( C_3 \times C_3 \big) \rtimes \SL_2(\F_3),
\]
where $Q_8$ is the quaternion group of order 8. We can view $P_1$ as a
subgroup of $P_2$, and $P_2$ as a subgroup of $P_3$.

\par We claim that $\Imm(\pi \circ \rho)$ cannot be $P_2$. 
If $\Eva(F_{\rho}/\Q)=P_2$, then $F_{\rho}$ (and thus
$E_{\rho}$) has a subfield $L$ with $L/\Q$ Galois and $\Eva(L/\Q)\cong
C_2 \times C_2$. But there is no such field ramified only at one prime
($p\neq 2$) of $\Q$.
\par In the next two subsections, we consider the cases $\Eva(F_{\rho}/\Q)=P_1$ and $\Eva(F_{\rho}/\Q)=P_3$. For each case, we first
fix a number field $F$ with $\Eva(F/\Q)=P_i$ and bound the number of
representations $\rho$ with $F_{\rho}=F$. We then bound the number of such
possible $F$-- i.e., $F$ such that $\Eva(F/\Q)=P_i$ and $F/\Q$ is
unramified outside $p\infty$.  
\par We will use the fact that the Schur multiplier $H_2(G,\Z)=C_3$
for $G=P_1$ and $G=P_3$. An alternative definition of the Schur
multiplier of a group $G$ is the largest group $X$ such that there
exists a group $\Gamma$ with $\Gamma/X \cong G$ and $X \subseteq \Gamma' \cap Z(\Gamma)$ \cite[page 151]{iscsgroup}, where $\Gamma'$ is the commutator
subgroup of $\Gamma$. Using this definition, one shows that the $p$-Sylow
subgroup of $H_2(G, \Z)$ is isomorphic a subgroup of the Schur
multiplier of a $p$-Sylow subgroup of $G$. Using this fact, it is an
exercise to show that $H_2(P_1, \Z)=C_3$. To show that
$H_2(P_3,\Z)=C_3$, we need to show that the Sylow-2 subgroup of $P_3$,
the quaternion group $Q_8$ of order 8, has trivial Schur multiplier and that
the Sylow-3 subgroup $(C_3 \times C_3)\rtimes C_3$ of $P_3$ has Schur
multiplier $C_3$. The former holds because every abelian subgroup of
$Q_8$ is cyclic, and one can show that all such group have trivial
Schur multiplier. To show
the latter, one can, for example, use the facts about the Schur multiplier
of a semi-direct product that are proven in \cite{evens}. 

\subsection{Representations with projective image isomorphic to $P_1$}  
\par We count the number of Artin representations $\rho$ unramified
outside $p\infty$ with projective
image isomorphic to $P_1$, i.e., $\Eva(F_{\rho}/\Q)\cong P_1$. Let $L$ denote
the fixed field of $C_3 \times C_3 \leq P_1$. Since $L$ is unramified
outside $p \infty$, it is the unique degree four
subfield of $\Q(\zeta_p)$. Writing $[E_{\rho}:F_{\rho}]=n$, we
have the following tower of fields.

\begin{figure}[h]
\begin{center}
  \setlength{\unitlength}{2.5cm}
  
 \begin{picture}(1,1.85)(2.7,.07)
 \put(3.04,0){$\Q$}
    \put(3.04,.6){$L$}
    \put(3.04,1.2){$F_{\rho}$}
    \put(3.04, 1.8){$E_{\rho}$}
   
    \put(3.15,.3){$C_4$}
    \put(3.15,1.53){$C_n$}
    \put(3.15,.9){$C_3\times C_3$}
    \qbezier(3.08,.21)(3.08,.18)(3.08,.53)
    \qbezier(3.08,.8)(3.08,.8)(3.08,1.15)
    \qbezier(3.08, 1.4)(3.08,1.4)(3.08,1.74)
\end{picture}
\end{center}
\end{figure}

\begin{lemma}\label{l1}
Let $K$ be a subfield of $\Q(\zeta_p)$, and let $\p$ be the unique prime of $K$ above the rational
prime $p$. Then $K^{\p\infty}=H_K(\zeta_p)$. 
\end{lemma}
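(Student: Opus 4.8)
The plan is to prove the two inclusions separately and then force equality by matching degrees. Throughout I would use that $K\subseteq\Q(\zeta_p)$ makes $K/\Q$ cyclic and totally ramified at $p$, so that $\p$ is indeed the unique prime of $K$ above $p$, its residue field is $\F_p$, and $K(\zeta_p)=\Q(\zeta_p)$. For the inclusion $H_K(\zeta_p)\subseteq K^{\p\infty}$ I would argue at the level of conductors. The field $H_K/K$ is unramified, hence has conductor $(1)$; and $\Q(\zeta_p)/K$ is abelian, ramified only at $\p$ and possibly at the real places of $K$, and is \emph{tamely} ramified at $\p$, since $\Q(\zeta_p)/\Q$ is tame at $p$ (the ramification index $p-1$ is prime to $p$). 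Thus $\Q(\zeta_p)/K$ has conductor dividing $\p\infty$, and so does the compositum $H_K(\zeta_p)$, giving $H_K(\zeta_p)\subseteq K^{\p\infty}$.

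For the reverse inclusion I would compare degrees over $K$. First, $H_K\cap\Q(\zeta_p)=K$: a subextension of $H_K$ is unramified over $K$, while a subextension of $\Q(\zeta_p)$ is totally ramified at $\p$, so their intersection is trivial. Hence $[H_K(\zeta_p):K]=h_K\,[\Q(\zeta_p):K]=h_K(p-1)/e_K$, where $e_K=[K:\Q]$. On the other side, $[K^{\p\infty}:K]=|\Cl^{\p\infty}(K)|$, and the exact sequence \eqref{exact} applied with modulus $\p\infty$ gives $|\Cl^{\p\infty}(K)|=h_K\,|(\Omf_K/\p\infty)^*|/|\phi(\Omf_K^*)|$, where $\phi\colon\Omf_K^*\to(\Omf_K/\p\infty)^*=(\Omf_K/\p)^*\times\langle\pm1\rangle^{r_1}$ is the reduction map, $r_1$ is the number of real places of $K$, and $|(\Omf_K/\p\infty)^*|=(p-1)2^{r_1}$. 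Since paragraph 1 already yields $[H_K(\zeta_p):K]\le[K^{\p\infty}:K]$, the only thing left to prove is the reverse inequality $|\Cl^{\p\infty}(K)|\le h_K(p-1)/e_K$, which by the displayed formula is equivalent to the \emph{lower} bound $|\phi(\Omf_K^*)|\ge e_K\,2^{r_1}$.

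Establishing this lower bound on the unit image is the step I expect to be the main obstacle. I would split it into the finite and infinite components. For the finite part I would use the cyclotomic units $(1-\zeta_p^a)/(1-\zeta_p)$, which are units of $\Z[\zeta_p]$ congruent to $a$ modulo $(1-\zeta_p)$; because $\Q(\zeta_p)/K$ is totally ramified, their norms down to $K$ are units of $\Omf_K$ congruent to $a^{(p-1)/e_K}\bmod\p$, and as $a$ varies these fill out the subgroup of order $e_K$ in $\F_p^*=(\Omf_K/\p)^*$. When $K$ is totally complex ($r_1=0$) this already finishes the count. The genuinely delicate point is the real case, where I must additionally arrange that the units surject onto $\langle\pm1\rangle^{r_1}$ (equivalently, that $K$ has units of every sign pattern), and that the sign-realizing units can be chosen so that, together with \emph{totally positive} norms of cyclotomic units hitting the order-$e_K$ subgroup of $\F_p^*$, the generated subgroup contains the full product $(\Omf_K/\p)^{*}{}^{(e_K)}\times\langle\pm1\rangle^{r_1}$ of order $e_K2^{r_1}$. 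I would extract the needed sign information from the transitive action of $\Gal(K/\Q)$ on the real places of $K$ combined with the signs of the Galois conjugates of the cyclotomic units; this sign bookkeeping, and the verification that the finite and infinite contributions are independent, is where the real work lies. Once $|\phi(\Omf_K^*)|\ge e_K2^{r_1}$ is in hand, the degree count closes and the two inclusions give $K^{\p\infty}=H_K(\zeta_p)$.
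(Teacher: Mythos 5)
Your opening paragraph and your reduction are sound: $H_K(\zeta_p)\subseteq K^{\p\infty}$ does follow from the conductor computation, the degree count correctly reduces the lemma to the lower bound $|\phi(\Omf_K^*)|\ge e_K\,2^{r_1}$, and your cyclotomic-unit argument genuinely handles the finite component (it is essentially the proof of the ray class field computation of Hoelscher that the paper cites). The problem is the step you defer. The ``sign bookkeeping'' in the totally real case is not postponed verification --- it is the entire content of the lemma, and the tools you name do not deliver it. Unwinding your own formula, the inequality $|\phi(\Omf_K^*)|\ge e_K 2^{r_1}$ is equivalent to $h_K^+=h_K$, where $h_K^+$ is the narrow class number: one always has $K^{\p\infty}\supseteq H_K^+(\zeta_p)$ for the narrow Hilbert class field $H_K^+$, and $H_K^+\subseteq H_K(\zeta_p)$ forces $H_K^+=H_K$, since $H_K^+/H_K$ is unramified at $\p$ while every nontrivial subextension of $H_K(\zeta_p)/H_K$ is ramified there. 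So you must prove that the units of $K$ realize (essentially) every signature in $\{\pm 1\}^{r_1}$, compatibly with hitting the order-$e_K$ subgroup mod $\p$. Transitivity of $\Gal(K/\Q)$ on the real places does not give this: for $K$ the real cyclic quartic subfield of $\Q(\zeta_p)$ --- a case the paper actually needs --- the signature space $\{\pm1\}^4$ is the uniserial $\F_2[C_4]$-module $\F_2[x]/(x-1)^4$, every nonzero submodule contains the diagonal signature of $-1$, and a Galois-stable unit image of index $4$ is perfectly consistent with the symmetry and with Armitage--Fr\"ohlich-type bounds. Your plan therefore stalls exactly where you say the real work lies, with no route past it.

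The paper sidesteps the real places by arguing at the top of the tower instead of the bottom: $K^{\p\infty}/\Q(\zeta_p)$ is abelian, unramified outside $(1-\zeta_p)$ and tame there, and $\Q(\zeta_p)$ has no real embeddings, so $K^{\p\infty}\subseteq \Q(\zeta_p)^{(1-\zeta_p)}=H_{\Q(\zeta_p)}$ by the cited result, whence $K^{\p\infty}/\Q(\zeta_p)$ is unramified; a short ramification argument then descends to $K$. If you want to keep your computation, either restrict to totally complex $K$ (where your argument is already complete), or supply a proof of $h_K^+=h_K$ for the fields in question, or better, base-change to $\Q(\zeta_p)$ before counting. (Be aware that the descent step in the paper's own proof --- the assertion that $K^{\p\infty}/H_K(\zeta_p)$ is totally ramified above $p$ --- quietly relies on the same fact $h_K^+=h_K$, since a priori the part of $K^{\p\infty}/K$ unramified at $\p$ is only contained in $H_K^+$, not in $H_K$; so the obstacle you have isolated is intrinsic to the statement, not an artifact of your method.)
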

\begin{proof}
Certainly $H_K(\zeta_p)\subseteq K^{\p\infty}$. Suppose the containment is
strict. Then $K^{\p\infty}/H_K(\zeta_p)$ is totally ramified at the primes of
$H_K(\zeta_p)$ above $p$. It follows that $K^{\p\infty}/\Q(\zeta_p)$ is tamely
ramified at the prime $(1-\zeta_p)$ of $\Q(\zeta_p)$ above $p$. But such an extension cannot exist since
$\Q(\zeta_p)^{(1-\zeta_p)}=H_{\Q(\zeta_p)}$ \cite[Proposition 3.4]{jing}.
\end{proof}
We have $F:=F_{\rho} \subseteq L^{\p}$, where $\p$ is the unique prime of $L$
above $p$. Suppose that the ramification degree $e=e(F/L)$ of $\p$ in
$F$ is 3 or 9. Then by Lemma \ref{l1}, the ramification in $F/L$ must be
obtained by adjoining the unique degree 3 ($e=3$) or 9 ($e=9$) subfield of $\Q(\zeta_p)$ to
$L$. It follows that
$|P_1'|=[F:F\cap \Q^{ab}]=3$ ($e=3$) or $1$ ($e=9)$. This is a contradiction since $P_1'=C_3 \times
C_3$. Therefore, $F/L$ is unramified and $F\subseteq H_L$. 
\par We have the following lemma of Tate:

\begin{lemma}\textup{(Tate \cite[Theorem 5]{weight1})}\label{tate} 
For each prime $p$, let $I_p$ be the inertia group of $p$ in
$\Eva_{\Q_p}$, which we view as a fixed subgroup of $\Eva_{\Q}$. Let
$\tilde{\rho}:\Eva_{\Q} \to \PGL_n(\C)$ be a projective
representation. Suppose that for each $p$, there exists a lift
$\rho_p:\Eva_{\Q_p}\to \GL_n(\C)$ of $\tilde{\rho}|_{\Eva_{\Q_p}}$ with $\rho_p|_{I_{p}}$ trivial for all but finitely many $p$. Then there
exists a unique lift $\rho: \Eva_{\Q}\to \GL_n(\C)$ of $\tilde{\rho}$ such that
$\rho|_{I_p}=\rho_p|_{I_p}$ for all $p$. 
\end{lemma}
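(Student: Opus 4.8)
The plan is to treat this as a standard obstruction problem for lifting a projective representation, with the arithmetic of $\Q$ entering only through one deep input of Tate; everything else is bookkeeping with the torsor structure of the fibers of $\pi$ and class field theory. First I would record that structure: if $\rho$ and $\rho'$ are two lifts of the same $\tilde{\rho}$, then agreeing in $\PGL_n(\C)$ forces $\rho'(g)=\lambda(g)\rho(g)$ for scalars $\lambda(g)\in\C^*$, and the homomorphism property of $\rho,\rho'$ forces $\lambda$ to be a continuous character $\lambda:\Eva_{\Q}\to\C^*$, i.e.\ $\rho'=\lambda\cdot\rho$. Thus, once one lift exists, the set of all lifts is a torsor under $\Hom(\Eva_{\Q},\C^*)$, and the same holds locally over each $\Eva_{\Q_p}$. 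Dually, the obstruction to the existence of any lift of $\tilde{\rho}$ is a single class $\omega\in H^2(\Eva_{\Q},\C^*)$ with trivial action, pulled back from the central extension $1\to\C^*\to\GL_n(\C)\to\PGL_n(\C)\to 1$.

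For existence of a global lift I would invoke Tate's vanishing theorem $H^2(\Eva_{\Q},\C^*)=0$; since $\tilde{\rho}$ has finite image the obstruction is torsion, so this reduces to the statement $H^2(\Eva_{\Q},\Q/\Z)=0$ for the torsion subgroup $\mu_\infty\subset\C^*$. This kills $\omega$ and produces some global lift $\rho_0:\Eva_{\Q}\to\GL_n(\C)$. Alternatively, one uses the given local lifts $\rho_p$ to see directly that $\omega$ is everywhere locally trivial, and then applies the Hasse principle $H^2(\Eva_{\Q},\C^*)\hookrightarrow\prod_p H^2(\Eva_{\Q_p},\C^*)$; either route is the crux and rests on the same theorem.

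It then remains to correct $\rho_0$ on inertia. For each $p$ the two lifts $\rho_0|_{\Eva_{\Q_p}}$ and $\rho_p$ of $\tilde{\rho}|_{\Eva_{\Q_p}}$ differ by a character $\psi_p:\Eva_{\Q_p}\to\C^*$, so that $\rho_0|_{I_p}\otimes(\psi_p|_{I_p})=\rho_p|_{I_p}$. By hypothesis $\rho_p|_{I_p}$ is trivial for almost all $p$, and $\rho_0$ is unramified outside a finite set, so $\psi_p|_{I_p}$ is trivial for all but finitely many $p$. Using Kronecker--Weber, $\Eva_{\Q}^{ab}\cong\hat{\Z}^*=\prod_p\Z_p^*$, under which the image of $I_p$ is the factor $\Z_p^*$; I would therefore glue the family $(\psi_p|_{I_p})_p$ into a single finite-order character $\chi$ of $\Eva_{\Q}$ whose $p$-component is $\psi_p|_{I_p}$, which is a well-defined character precisely because almost all of them are trivial. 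Setting $\rho:=\rho_0\otimes\chi$ gives a lift of $\tilde{\rho}$ with $\rho|_{I_p}=\rho_0|_{I_p}\otimes\chi|_{I_p}=\rho_0|_{I_p}\otimes\psi_p|_{I_p}=\rho_p|_{I_p}$ for every $p$, by the local-global compatibility of the Artin map.

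For uniqueness, if $\rho$ and $\rho'$ both satisfy the conclusion, their ratio is a character of $\Eva_{\Q}$ trivial on every $I_p$, hence unramified at all finite primes; by Kronecker--Weber (equivalently, because the narrow class number of $\Q$ is $1$) the only such character is trivial, so $\rho=\rho'$. The main obstacle is the existence half, which is not formal: it hinges entirely on Tate's theorem that $H^2(\Eva_{\Q},\C^*)$ vanishes, and this is where the global arithmetic of $\Q$ is genuinely used. The remaining difficulty is only technical, namely handling continuous cochains valued in the non-finite group $\C^*$; one sidesteps this by reducing the obstruction and all twisting characters to the roots of unity $\mu_\infty\subset\C^*$, where everything becomes a finite-order profinite-cohomology computation.
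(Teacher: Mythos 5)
The paper does not actually prove this lemma---it is quoted, with attribution to Tate, from Serre's Durham article \cite{weight1}---and your argument is precisely the standard proof given there: produce some lift via Tate's vanishing theorem $H^2(\Eva_{\Q},\C^*)=0$, correct it on inertia by gluing the local discrepancy characters $\psi_p|_{I_p}$ into a global character through $\Eva_{\Q}^{ab}\cong\hat{\Z}^*=\prod_p\Z_p^*$ (legitimate since almost all are trivial and each has finite order), and get uniqueness because a character of $\Eva_{\Q}$ trivial on every $I_p$ is everywhere unramified, hence trivial. The only thing I would strike is the parenthetical ``Hasse principle'' $H^2(\Eva_{\Q},\C^*)\hookrightarrow\prod_p H^2(\Eva_{\Q_p},\C^*)$ offered as an alternative route: for trivial-action $\C^*$ coefficients this injectivity is not an off-the-shelf fact independent of Tate's theorem but essentially a restatement of it, though since your main line of argument invokes the vanishing theorem directly, nothing is lost.
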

Returning to our notation before Lemma \ref{tate}, since $F_{\rho}/\Q$
is unramified outside $p$, for all primes $l \neq p$,
$\tilde{\rho}|_{\Eva_{\Q_l}}$ factors through a cyclic extension of
$\Q_l$. The Schur multiplier $H_2(G, \Z)$ of a
cyclic group $G$ is trivial, so any projective representation of $G$
lifts to an ordinary representation of $G$ (see \cite[Chapter 20]{huppert}, in particular Theorem 20.8, part c). We may thus lift $\tilde{\rho}|_{\Eva_{\Q_l}}$ to a map
$\rho_l:\Eva_{\Q_l} \to \GL_3(\C)$ such that the fixed
field of the kernel of $\tilde{\rho}|_{\Eva_{\Q_l}}$ is the same as the
fixed field of the kernel of $\rho_l$; in particular, both fixed
fields are unramified at $l$. 
\par Since $F_{\rho}/L$ is
unramified, $F_{\rho}\Q(\zeta_p)/\Q(\zeta_p)$ is unramified. By class
field theory, the principal ideal $(1-\zeta_p)$ splits completely in
$F_{\rho}\Q(\zeta_p)$. It follows that the ideal $\p$ of $L$ splits completely in $F_{\rho}$. Thus the kernel of
the fixed field of $\tilde{\rho}|_{\Eva_{\Q_p}}$ is a totally tamely ramified
extension of $\Q_p$, so cyclic. We may therefore lift
$\tilde{\rho}|_{\Eva_{\Q_p}}$ to a map $\rho_p$ such that the kernel of
the fixed field of $\rho_p$ is the same as the kernel of the fixed field
of $\tilde{\rho}|_{\Eva_{\Q_p}}$.
\par Putting this local data together, Lemma \ref{tate} now tells us
that there exists a lift $\rho$ of $\tilde{\rho}$ such that the
ramification in $F_{\rho}/\Q$ is the same as the ramification in
$E_{\rho}/\Q$ (i.e., $E_{\rho}$ is unramified at all primes $l \neq p$
and the ramification degree at $p$ in $E_{\rho}/\Q$ is the same as the
ramification degree at $p$ in $F_{\rho}/\Q$). 
\par We will need the following theorem:

\begin{theorem}\textup{(Fr\"{o}hlich \cite[page 31; page 43, Corollary
    2]{frohl})}\label{thf}
Let $L/K$ be Galois and let $N$ be a central extension of $L/K$. Then
there is a surjective map
\[
 H_2(\Eva(L/K),\Z) \twoheadrightarrow
\Eva\big(N/L(N \cap K^{ab})\big),
\]
and there exists a central extension $N'$ of $L/K$ such that 
\[
 H_2(\Eva(L/K),\Z)  \to \Eva\big(N'/L(N' \cap K^{ab})\big) 
\]
is an isomorphism. Moreover, if $K=\Q$, one can take $N'$ to be ramified at the same
rational primes that $L$ is ramified at. 
\end{theorem}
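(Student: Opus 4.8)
Since the statement is quoted from Fr\"ohlich, the plan is to indicate the structure of a proof rather than reproduce it; the two assertions are of quite different character. First I would translate everything into group theory. Put $\Gamma=\Eva(N/K)$, $A=\Eva(N/L)$, and $G=\Eva(L/K)$; the hypothesis that $N$ is a central extension of $L/K$ says precisely that $1 \to A \to \Gamma \to G \to 1$ is a central extension of finite groups, i.e.\ $A \subseteq Z(\Gamma)$. Since $K^{ab}$ is the maximal abelian extension of $K$, the field $N \cap K^{ab}$ is the fixed field of the commutator subgroup $\Gamma'$, so $\Eva\big(N/L(N\cap K^{ab})\big)=A \cap \Gamma'$. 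Thus both claims become statements about the kernel $A$ of a central group extension and its intersection with $\Gamma'$.

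For the surjectivity I would invoke the five-term exact sequence in homology attached to the extension. Since $A$ is central, $G$ acts trivially on $H_1(A,\Z)=A$, and the sequence reads
\[
H_2(\Gamma,\Z) \to H_2(G,\Z) \to A \to \Gamma^{ab} \to G^{ab} \to 0.
\]
The arrow $A \to \Gamma^{ab}$ is induced by the inclusion $A \hookrightarrow \Gamma$, so its kernel is exactly $A \cap \Gamma'$; by exactness the image of $H_2(G,\Z)\to A$ equals that kernel, giving the surjection $H_2(G,\Z)\twoheadrightarrow A\cap\Gamma'=\Eva\big(N/L(N\cap K^{ab})\big)$.

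For the second assertion I would realize a stem cover of $G$ arithmetically. Group-theoretically there is a central extension $1 \to H_2(G,\Z) \to \tilde G \to G \to 1$ with $H_2(G,\Z)\subseteq Z(\tilde G)\cap \tilde G'$, for which the same five-term sequence forces the connecting map $H_2(G,\Z)\to H_2(G,\Z)$ to be an isomorphism. It therefore suffices to realize $\tilde G$ as $\Eva(N'/K)$ with $\Eva(N'/L)=H_2(G,\Z)$, i.e.\ to solve the central embedding problem attached to $L/K$ and $\tilde G \twoheadrightarrow G$. This problem has abelian kernel, so over a number field its solvability is governed by a class in a Brauer group; once one solution exists, the full set of solutions is a torsor under $\Hom(\Eva_K,H_2(G,\Z))$.

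The main obstacle, and the content of the ``moreover'' clause, is the ramification control when $K=\Q$. I would exploit the torsor structure: starting from any solution $N'$, I twist by a global character $\Eva_\Q \to H_2(G,\Z)$ to correct the local behaviour prime by prime. At a prime $\ell$ where $L/\Q$ is unramified, inertia maps trivially to $G$ and hence into the central kernel, so the unwanted ramification at $\ell$ is a homomorphism from inertia into an abelian group and can be cancelled by a local character. The hard step is then assembling these local corrections into a global character with the prescribed local components, a Grunwald--Wang-type existence problem where the local--global compatibility must be verified; in the cases needed below $H_2(G,\Z)=C_3$ has order prime to $2$, so no Grunwald--Wang exception arises and this step is unobstructed.
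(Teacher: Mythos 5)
The paper does not prove this statement at all: it is quoted verbatim from Fr\"ohlich's monograph and used as a black box, so there is no internal proof to compare against. Judged on its own terms, your reduction to group theory and your proof of the first assertion are correct and standard: $\Eva\big(N/L(N\cap K^{ab})\big)=A\cap\Gamma'$, and the five-term homology sequence of the central extension $1\to A\to\Gamma\to G\to 1$ identifies the image of $H_2(G,\Z)\to A$ with exactly that intersection. This is essentially how the surjectivity is always proved.

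The second assertion is where there is a genuine gap. Your plan is: realize a stem cover $\tilde G$ of $G$ as $\Eva(N'/K)$ by solving the central embedding problem, then twist by characters to control ramification. But you never establish that the embedding problem has \emph{any} solution; you only remark that its solvability is ``governed by a class in a Brauer group.'' That obstruction class (the pullback of $[\tilde G]\in H^2(G,H_2(G,\Z))$ along $\Eva_K\twoheadrightarrow G$) does not vanish in general, and the twisting step cannot begin without a first solution. A concrete failure: for $G=C_2\times C_2$ the stem covers are $Q_8$ and $D_4$, and by Witt's criterion most biquadratic fields $\Q(\sqrt a,\sqrt b)$ embed in neither a $Q_8$- nor a $D_4$-extension. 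Fr\"ohlich's theorem survives precisely because $N'$ is \emph{not} required to realize the stem cover: the kernel $\Eva(N'/L)$ may be much larger than $H_2(G,\Z)$, and only its intersection with the commutator subgroup of $\Eva(N'/K)$ must equal the full multiplier. Fr\"ohlich constructs such an $N'$ directly as the central class field of $L/K$ inside a suitable ray class field of $L$, computing $\Eva\big(N'/L(N'\cap K^{ab})\big)$ by an idele-theoretic formula; no embedding problem for a prescribed finite group is solved. To rescue your approach you would need an intermediate step showing that the obstruction can be killed by enlarging the central kernel (pushing out along some $H_2(G,\Z)\hookrightarrow A''$) while preserving $A''\cap\Gamma'\cong H_2(G,\Z)$, and that this can be done with ramification confined to the primes ramifying in $L$. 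Your character-twisting argument for the ramification control over $\Q$ is essentially sound once a solution exists (and the appeal to Grunwald--Wang is unnecessary, since prescribing restrictions to inertia at finitely many primes of $\Q$ is unobstructed by Kronecker--Weber), but it is the existence step, not the twisting step, that carries the arithmetic content of the theorem.
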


\begin{remark}
The Schur multiplier $H_2(G, \Z)$ of a group
$G$ is intimately related to the study of central extensions and projective
representations of $G$. See, for example, \cite[Chapter 20]{huppert}.
\end{remark}

\begin{lemma}\label{c1}
For a prime $p \equiv 1 \pmod{4}$, let $L \subseteq \Q(\zeta_p)$ be
the unique field
with $[L:\Q]=4$. Suppose that $L$ admits an unramified $C_3\times
C_3$ extension $F$. Then $F/\Q$ has a unique degree three unramified
central extension. 
\end{lemma}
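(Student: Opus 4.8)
The plan is to prove existence and uniqueness separately, in both cases exploiting that the Schur multiplier $H_2(P_1,\Z)=C_3$ together with Fr\"ohlich's Theorem \ref{thf} pins down the nonabelian central extensions, while a single ramification computation at $p$ rules out any extra abelian-induced ones. Throughout I use that $\Eva(F/\Q)=P_1$ has commutator subgroup $C_3\times C_3$, so $F\cap\Q^{ab}=L$, and that since $F/L$ is unramified while $L/\Q$ is the totally (tamely) ramified cyclic quartic subfield of $\Q(\zeta_p)$, the inertia at $p$ in $\Eva(F/\Q)$ is a cyclic group of order $4$. I also use that every unramified central extension $N$ of $F/\Q$ is abelian and unramified over $F$, hence contained in the Hilbert class field $H_F$.

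For existence, I would apply the second part of Theorem \ref{thf} to $F/\Q$: since $H_2(P_1,\Z)=C_3$, there is a central extension $N$ of $F/\Q$, ramified only at $p$, for which $H_2(P_1,\Z)\to\Eva(N/F(N\cap\Q^{ab}))$ is an isomorphism, so $\Eva(N/F(N\cap\Q^{ab}))\cong C_3$. The task is to extract from $N$ a degree-$3$ central extension that is unramified over $F$. Because $p\neq 2,3$ and the relevant degrees ($4$, $3$, $9$) are prime to $p$, the extension $N/\Q$ is tame at $p$, so inertia there is cyclic; the preimage in $\Eva(N/\Q)$ of the order-$4$ inertia of $\Eva(F/\Q)$ is a central extension of $C_4$ by $C_3$, hence cyclic of order $12$. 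Exactly as in the lift produced by Lemma \ref{tate} in the preceding discussion, I would arrange the inertia at $p$ to have order $4$, meeting $\Eva(N/F)$ trivially, so that $N/F$ is unramified. Passing to the fixed field of the (prime-to-$3$) abelian-induced part of $\Eva(N/F)$, which is central in $\Eva(N/\Q)$, then yields a genuine degree-$3$ unramified central extension of $F/\Q$.

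For uniqueness the key input is a ramification computation showing that no abelian-induced cubic survives. Let $M_0\subseteq\Q(\zeta_p)$ be the cyclic cubic subfield (which exists only when $3\mid p-1$; otherwise $\Q$ has no cyclic cubic extension ramified only at $p$ and there is nothing to exclude). Since $M_0\cap F=M_0\cap L=\Q$ we have $[FM_0:F]=3$; and as $F/L$ is unramified while $LM_0\subseteq\Q(\zeta_p)$ is totally ramified of degree $12$ at $p$, one computes $e(FM_0/F)=12/4=3$, so $FM_0/F$ is ramified. Hence for any central extension $N$ of $F/\Q$ with $N/F$ unramified, the abelian-induced subextension $F(N\cap\Q^{ab})/F$ can contain no cubic piece, i.e. it has trivial $3$-part. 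Now suppose $N_1\neq N_2$ were two degree-$3$ unramified central extensions. Their compositum $N_1N_2$ is again unramified central over $F$, with $\Eva(N_1N_2/F)\cong C_3\times C_3$; by the previous sentence $F(N_1N_2\cap\Q^{ab})=F$, so the surjection of Theorem \ref{thf} reads $C_3=H_2(P_1,\Z)\twoheadrightarrow\Eva(N_1N_2/F)\cong C_3\times C_3$, which is impossible. Thus there is at most one such extension.

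I expect the main obstacle to be the existence step, specifically promoting the central extension supplied by Fr\"ohlich's theorem (which is only guaranteed to be unramified away from $p$) to one that is genuinely unramified over $F$. This amounts to controlling the inertia at $p$ inside the cyclic group of order $12$ above, and is precisely where the tameness at $p$ and the lifting mechanism of Lemma \ref{tate} are needed; by contrast, the uniqueness step reduces cleanly to the single ramification identity $e(FM_0/F)=3$ together with the Fr\"ohlich surjection from $C_3$.
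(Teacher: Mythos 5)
Your uniqueness argument is essentially the paper's: both reduce to showing $N_i\cap F\Q^{ab}=F$ (your explicit computation $e(FM_0/F)=3$ is just a spelled-out version of the paper's remark that $N_i/F$ and $F/L$ are unramified while every nontrivial subextension of $\Q(\zeta_{p^\infty})/L$ is ramified), and then the compositum $N_1N_2$ violates the Fr\"ohlich surjection $C_3\twoheadrightarrow \Eva(N_1N_2/N_1N_2\cap F\Q^{ab})$. That half is fine.

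The existence half has a genuine gap. You start from the central extension $N'$ supplied by the second part of Theorem \ref{thf}, which is only guaranteed to be ramified at the same rational primes as $F$, and then assert you can ``arrange the inertia at $p$ to have order $4$.'' There is no mechanism for doing this to a given central extension: the inertia of $\Eva(N'/\Q)$ at $p$ is whatever it is, and a priori it is cyclic of order $12$ with its $C_3$ subgroup equal to the Schur-multiplier part $\Eva\bigl(N'/F(N'\cap\Q^{ab})\bigr)$. In that scenario every subfield of $N'$ that is unramified over $F$ lies in $F(N'\cap\Q^{ab})$, so ``passing to the fixed field of the prime-to-$3$ abelian-induced part'' yields nothing, and Fr\"ohlich's theorem alone cannot rule this out. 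The paper's proof of existence is not a modification of $N'$ at all: it starts from the \emph{projective} representation $\tilde\rho:\Eva_\Q\to P_1\subseteq\PGL_3(\C)$, uses the local analysis carried out just before the lemma (tameness at $p$ plus $F/L$ unramified make the local projective image at $p$ cyclic of order $4$, hence liftable with inertia image of order exactly $4$; at $l\neq p$ the local lifts are unramified) and then invokes Tate's Lemma \ref{tate} to produce a \emph{global} lift $\rho$ whose inertia at $p$ has order $4$. Its kernel's fixed field $N$ is then unramified over $F$ by construction, Theorem \ref{thf} bounds $[N:F]\le 3$, and $N\neq F$ because $P_1$ has no irreducible three-dimensional representations. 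You gesture at Lemma \ref{tate}, but that lemma lifts projective representations with prescribed local behavior; it does not let you re-route the ramification of a central extension you already have. Without carrying out the Tate-lifting construction, the existence of an unramified degree-three central extension is not established.
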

\begin{proof}
We first show the existence of such an extension. Let $\tilde{\rho}$
be a projective representation of $\Eva(F/\Q)$ with image isomorphic to $P_1$. Let $\rho$ be a lift of
$\tilde{\rho}$ such that the image of $\rho|I_p$ has order 4, the
existence of which is guaranteed by Lemma \ref{tate}. Let $N$ be the
fixed field of $\ker \rho$. Since the ramification degree of $p$ in
$L$ is 4, $N/L$ is unramified (recall we have assumed that $F/L$ is
unramified) and $N \cap F\Q^{ab}=F$. By Theorem \ref{thf}, we must
have $[N:N\cap F\Q^{ab}]=[N:F]\leq 3$. We cannot have $N=F$ because the
group $P_1$ does not have any irreducible three-dimensional
representations, so $[N:F]=3$. Thus $N$ gives the desired extension. 
\par Now for uniqueness. Suppose that $N_1$ and $N_2$ are distinct
non-trivial unramified extensions of $F/\Q$. We have
$H_2(\Eva(F/\Q),\Z)=C_3$ and $F(N_i\cap \Q^{ab}) \subseteq N_i \cap F
\Q^{ab}$, so it follows from Theorem \ref{thf} that $C_3
\twoheadrightarrow \Eva(N_i/N_i \cap F \Q^{ab}), i=1,2$. Since $N_i/F$
and $F/L$ are unramified, we have $N_i \cap \nobreak F \Q^{ab}=F$. Suppose $N_1
\neq N_2$. Then $N_1N_2/F$ is an unramified central extension of
$F/\Q$ with $N_1N_2 \cap F \Q^{ab}=F$ and $[N_1N_2:F]=9$,
contradicting Theorem \ref{thf}. 
\end{proof}

\begin{lemma}\label{maxcent}
Let $F/\Q$ be a finite Galois extension with group $G$, and suppose
$H_2(G, \Z) \cong C_3$. For a fixed number field $M \subset
\Q(\zeta_{p^{\infty}})$, $FM/\Q$ admits a maximum of three distinct
non-trivial central extensions $E_i$ with $E_i \cap \Q^{ab}=M$ and
$E_i/\Q$ unramified outside $p \infty$.
\end{lemma}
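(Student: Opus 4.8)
The plan is to pin down a single maximal extension and then count subfields inside it. First I would dispose of a degenerate case: if $FM\cap\Q^{ab}\neq M$, then $E\cap\Q^{ab}\supseteq FM\cap\Q^{ab}\supsetneq M$ for every central $E\supseteq FM$, so there are no admissible $E_i$ and the bound is vacuous. So assume $FM\cap\Q^{ab}=M$, equivalently $\Eva(FM/\Q)^{ab}\cong\Eva(M/\Q)$. Let $\mathcal{N}$ be the compositum of all central extensions $E$ of $FM/\Q$ that are unramified outside $p\infty$ and have $\Eva(E/FM)$ elementary abelian of exponent $3$; composita of central (resp.\ unramified-outside-$p\infty$) extensions remain of this type, so $\mathcal{N}$ is itself such an extension and contains every admissible $E_i$, once one notes via Theorem \ref{thf} that $\Eva(E_i/FM)$ is a quotient of (the $3$-part of) $H_2(\Eva(FM/\Q),\Z)$ and hence killed by $3$. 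Write $W=\Eva(\mathcal{N}/\Q)$, $V=\Eva(\mathcal{N}/FM)\le Z(W)$ (an $\F_3$-vector space), and $V_0=V\cap[W,W]=\Eva\big(\mathcal{N}/FM(\mathcal{N}\cap\Q^{ab})\big)$.

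The heart is to show $\dim_{\F_3}V\le 2$, which I would do by bounding the two graded pieces of $V_0\le V$ separately. For the quotient $V/V_0\cong\Eva(\mathcal{N}\cap\Q^{ab}/M)$: this is an abelian extension of $\Q$, unramified outside $p\infty$, hence contained in $\Q(\zeta_{p^\infty})$ by Kronecker--Weber, and it is killed by $3$; since $\Eva(\Q(\zeta_{p^\infty})/\Q)\cong\Z_p^{\times}\cong C_{p-1}\times\Z_p$ has $3$-rank at most $1$ (here $p\neq 3$, so the pro-$p$ factor $\Z_p$ contributes nothing), I get $\dim_{\F_3}(V/V_0)\le 1$. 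For the subgroup $V_0$: Theorem \ref{thf}, applied with $L=FM$, $K=\Q$, exhibits $V_0$ as a quotient of $H_2(\Eva(FM/\Q),\Z)$, and I would then prove its $3$-part is exactly $C_3$, giving $\dim_{\F_3}V_0\le 1$ and hence $\dim_{\F_3}V\le 2$.

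With $\dim_{\F_3}V\le 2$ the count is mechanical. An admissible $E$ corresponds to an index-$3$ subgroup $H\le V$ via $E=\mathcal{N}^H$, and the condition $E\cap\Q^{ab}=M$ translates, after intersecting $\Eva(\mathcal{N}/M)=V[W,W]$ with $V$, into $HV_0=V$, i.e.\ $V_0\not\subseteq H$; in particular every admissible $E$ has $[E:FM]=3$, which is where $\dim_{\F_3}V_0\le 1$ is used to rule out a degree-$9$ admissible extension (which would otherwise contribute five subfields and break the bound). The number of index-$3$ subgroups of $V$ not containing the line $V_0$ is
\[
\frac{3^{\dim V}-1}{3-1}-\frac{3^{\dim V-1}-1}{3-1}=3^{\dim V-1}\le 3,
\]
which is the asserted bound.

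The main obstacle is the Schur-multiplier computation for $\Gamma:=\Eva(FM/\Q)$, namely showing $H_2(\Gamma,\Z)_3\cong C_3$ rather than something larger; this is exactly the point that controls whether a degree-$9$ admissible extension can occur, so the whole lemma rests on it. The key structural input is that $M/\Q$ is abelian, which forces $\Eva(FM/F)$ to be \emph{central} in $\Gamma$ (conjugation by any element of $\Gamma$ restricts to the identity on $\Eva(FM/F)$, since its image in $\Eva(M/\Q)$ is abelian), and that $\Eva(M/\Q)$ is cyclic because $M\subseteq\Q(\zeta_{p^\infty})$. Thus $\Gamma$ is a central extension of $G$ by a cyclic group, and I would run the Lyndon--Hochschild--Serre spectral sequence for $1\to\Eva(FM/F)\to\Gamma\to G\to 1$ (equivalently pass to a Sylow-$3$ subgroup, as in the earlier computation of $H_2(P_3,\Z)$): the cyclic kernel has trivial Schur multiplier, so the only possible spurious $3$-contribution is the term $G^{ab}\otimes\Eva(FM/F)$. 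For $G=P_1$ (where $G^{ab}=C_4$) this term has no $3$-part and one is done immediately, whereas for $G=P_3$ (where $G^{ab}=C_3$) the delicate step is to check that the relevant differential annihilates the extra $C_3$ so that $H_2(\Gamma,\Z)_3$ collapses onto the $C_3$ coming from $H_2(G,\Z)$; verifying this collapse in the case $9\mid p-1$, where $\Eva(FM/F)$ itself acquires a $C_3$, is the crux of the argument.
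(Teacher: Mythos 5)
Your overall strategy --- form the compositum of the admissible extensions, bound the $\F_3$-rank of its Galois group over $FM$ by $2$ using Fr\"ohlich's theorem together with Kronecker--Weber, then count index-$3$ subgroups avoiding the cyclotomic line --- is the same skeleton as the paper's own proof, and your treatment of the quotient $V/V_0$ and the final subgroup count are correct and more explicit than what the paper writes down. But the argument is not complete: the bound $\dim_{\F_3}V_0\le 1$, which you yourself identify as the crux, is precisely the step you do not carry out. You reduce it to showing that the $3$-part of $H_2\big(\Eva(FM/\Q),\Z\big)$ is cyclic, observe that the term $G^{ab}\otimes\Eva(FM/F)$ can contribute an extra $C_3$ when $G^{ab}$ has order divisible by $3$ and $3\mid[FM:F]$, and then leave the vanishing of the relevant differential unverified. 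If that extra $C_3$ survived, $V$ could have rank $3$ and your own count would produce more than three admissible $E_i$, so the truth of the lemma genuinely hinges on the unproven claim. A smaller issue of the same kind: your reduction at the outset to extensions with $\Eva(E/FM)$ elementary abelian of exponent $3$ already presupposes that $H_2\big(\Eva(FM/\Q),\Z\big)$ has no prime-to-$3$ part and no $C_9$ quotient --- again the very object you have not computed --- since otherwise an admissible $E_i$ of degree $2$ or $9$ over $FM$ would escape your compositum $\mathcal{N}$.

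The paper sidesteps this computation entirely by applying Theorem \ref{thf} with $L=F$ rather than $L=FM$: the hypothesis $H_2(G,\Z)\cong C_3$ for $G=\Eva(F/\Q)$ then directly forces $[E_i:FM]=3$ and bounds $\Eva(N/FM')$ by $3$, where $N$ is the compositum and $M'=N\cap\Q^{ab}$; the cyclotomic direction $FM'/FM$ (your $V/V_0$) is absorbed into the term $L(N\cap K^{ab})$ of Fr\"ohlich's statement rather than split off by hand. I would redirect your effort accordingly: instead of attacking the Schur multiplier of the larger group $\Eva(FM/\Q)$, verify that the compositum is a central extension of $F/\Q$ and not merely of $FM/\Q$ --- your observation that $\Eva(FM/F)$ is central in $\Eva(FM/\Q)$ gets you most of the way there, and in the paper's applications it is automatic because $\Eva(E/F)$ is the full center of $\Eva(E/\Q)$ --- and then invoke the hypothesis on $H_2(G,\Z)$ directly. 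As written, however, your proof has a gap at its decisive step.
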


\begin{proof}
Let $E_i$ be fields as in the statement of the lemma. By Theorem
\ref{thf}, for each $i$, we have $[E_i:FM]=3$. The compositum $\prod
E_i$ is a central extension of $FM/\Q$. By Theorem \ref{thf}, there
exists $M' \subset \Q(\zeta_{p^{\infty}})$ containing $M$ such that
$|\Eva(\prod E_i/FM')|\leq 3$. This means $\prod E_i/FM$ has rank at
most two, i.e., its Galois group is $C_3$ or $C_3 \times C_3$. Such a
field extension has at most (in the $C_3 \times C_3$ case) four
strictly intermediate extensions-- three $E_i$'s and $FM'$, (and we
must have $[M':M]=3$). 

\end{proof}

\par We now come to the main result of this section. 
\begin{proposition} \label{mainth} Let $\rho:\Eva_{\Q}\to \GL_3(\C)$ be an Artin representation
unramified outside $p\infty$ with projective image isomorphic to $P_1$. Let $F$ be the
fixed field of $\ker{\pi \circ \rho}$. Let $\Q(\zeta_{p,3})$ denote the maximal
subfield of $\Q(\zeta_p)$ of $3$-power degree over $L$, and let
$\Q(\zeta_{p,{3'}})$ be the composite of all subfields of
$\Q(\zeta_{p^{\infty}})$ with degree prime-to-$3$ over $L$. Then the fixed field of $\ker{\rho}$ either 
\begin{itemize}
\item[\textup{i)}] lies between $N$ and
$N\Q(\zeta_{p,{3'}})$, where $N$ is the unique non-trivial unramified
central extension of $F/\Q$, and $[N:F]=3$, or 
\item[\textup{ii)}] lies between $E_3$
and $E_3\Q(\zeta_{p,{3'}})$, where $E_3 \nsubseteq F\Q(\zeta_{p,3})$
is one of two possible degree 3 ramified
extensions of $FM_3$ that is central over $F/\Q$, where $M_3$ is a
subfield of $ \Q(\zeta_{p,3})$.
\end{itemize}
\end{proposition}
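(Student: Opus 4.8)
The plan is to analyze the lift $\rho$ of $\tilde\rho = \pi\circ\rho$ by decomposing $E_\rho$ according to how its ramification and its abelian part interact with the cyclotomic tower. Recall from the discussion before Theorem~\ref{thf} that we have already arranged, via Lemma~\ref{tate}, a lift whose ramification matches that of $F$; the content of the proposition is to locate $E_\rho$ precisely. The key structural input is that $\Eva(E_\rho/F_\rho) = C_n$ is cyclic (Schur's lemma, since $\rho$ is irreducible) and central in $\Eva(E_\rho/\Q)$, so $E_\rho/F$ is a \emph{central} extension of $F/\Q$. Since $Z(P_1)$ is trivial (one checks $P_1 = (C_3\times C_3)\rtimes C_4$ has trivial center because $C_4$ acts on $C_3\times C_3$ with no nonzero fixed vectors), the entire center $\Eva(E_\rho/F)$ arises from the central extension, controlled by $H_2(P_1,\Z) = C_3$ together with the abelian part coming from cyclotomic fields.

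First I would split $E_\rho$ into its ``Schur part'' and its ``cyclotomic part.'' Write $E_\rho \cap \Q^{ab} = M$; since $E_\rho/\Q$ is unramified outside $p\infty$ and $p \neq 2$, we have $M \subseteq \Q(\zeta_{p^\infty})$. Decompose $M$ as the compositum of its maximal $3$-power-degree-over-$L$ subfield $M_3 \subseteq \Q(\zeta_{p,3})$ and a prime-to-$3$ part inside $\Q(\zeta_{p,3'})$. The prime-to-$3$ cyclotomic part can be freely adjoined: adjoining $\Q(\zeta_{p,3'})$ does not change the Schur-theoretic obstruction since $H_2(P_1,\Z) = C_3$ has order prime to everything in the $3'$-part, which explains why in both cases the field is allowed to range up to the compositum with $\Q(\zeta_{p,3'})$. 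So the real question reduces to identifying the minimal central extension once the prime-to-$3$ cyclotomic freedom is stripped away, i.e.\ to understanding $E_\rho M_3$ where $M_3 \subseteq \Q(\zeta_{p,3})$.

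Next I would apply Lemma~\ref{maxcent} with $G = \Eva(F/\Q) = P_1$ and the fixed cyclotomic field $M_3$. This bounds the non-trivial central extensions $E_i$ of $FM_3/\Q$ with $E_i \cap \Q^{ab} = M_3$ and $E_i$ unramified outside $p\infty$: there are at most three of them, sitting inside a $C_3\times C_3$ extension of $FM_3$ whose fourth strictly-intermediate field is $FM_3'$ with $[M_3':M_3]=3$. The trichotomy now comes from separating these three extensions by their ramification at $p$. By Lemma~\ref{c1}, exactly one non-trivial unramified central extension $N$ of $F/\Q$ exists when $M_3 = L$ (the $[N:F]=3$ unramified case), giving alternative~(i). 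The remaining central extensions of $FM_3$ are \emph{ramified} over $F$; among these, one lies inside $F\Q(\zeta_{p,3})$ (it is absorbed into the cyclotomic tower and corresponds to taking a larger $M_3$), and the other two, characterized by $E_3 \nsubseteq F\Q(\zeta_{p,3})$, give alternative~(ii). I would verify that $E_\rho$ must be one of these by checking that $\Eva(E_\rho/F)$, being central and killed after adjoining $M_3$, embeds into the $C_3\times C_3$ of Lemma~\ref{maxcent}, so $E_\rho$ lies between the appropriate $E_i$ (namely $N$ or $E_3$) and its compositum with $\Q(\zeta_{p,3'})$.

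The main obstacle will be the bookkeeping that distinguishes alternative~(i) from~(ii): one must show that the \emph{unramified} central extension and the \emph{ramified} central extensions are genuinely the only possibilities, and that exactly two of the ramified ones escape $F\Q(\zeta_{p,3})$. This hinges on a careful count of how the three $E_i$ from Lemma~\ref{maxcent} distribute relative to the cyclotomic subfield $F\Q(\zeta_{p,3})$ and on the fact that $H_2(P_1,\Z) = C_3$ forces $[E_i:FM_3]=3$, leaving no room for a rank-two genuinely non-cyclotomic extension. A subtle point requiring care is the interplay between the central extension (measured by the Schur multiplier) and the $3$-part of the cyclotomic tower: one must confirm that passing from $F$ to $FM_3$ with $M_3 \subseteq \Q(\zeta_{p,3})$ can convert an otherwise unramified central extension into a ramified one, which is precisely what separates the two cases and pins down why $E_3$ is \emph{ramified} over $FM_3$ while $N$ is unramified over $F$.
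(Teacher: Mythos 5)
Your skeleton matches the paper's: invoke Theorem~\ref{thf} to get $M = E\cap\Q^{ab}$ with $[E:FM]\le 3$, use Lemma~\ref{c1} for the unramified case and Lemma~\ref{maxcent} for the ramified one, and split $M$ into its $3$-part and $3'$-part. But two essential steps are missing, and one of your proposed mechanisms is wrong.

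First, you never rule out $E = FM$. This is not automatic from centrality or from $H_2(P_1,\Z)=C_3$: you must show that a group of the form $(C_3\times C_3)\rtimes(C_{2^k}\times C_m)$, with the action factoring through the $C_4$ quotient, has no irreducible three-dimensional representations. The paper does this via the abelian normal subgroup $C_3\times C_3\times C_m$ of index $2^k$ and \cite[Corollary 53.18]{CR} (every irreducible degree divides $2^k$). Without this, ``$E$ is a degree 3 extension of $FM$'' is unsupported. Second, the step you yourself flag as ``the main obstacle'' --- that at most \emph{two} ramified candidates $E_3$ survive --- is exactly the part that needs a proof, and the mechanism you propose for it does not work. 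You suggest that one of the three extensions $E_i$ from Lemma~\ref{maxcent} ``lies inside $F\Q(\zeta_{p,3})$ and is absorbed into the cyclotomic tower,'' but by construction those $E_i$ satisfy $E_i\cap\Q^{ab}=M_3$, so none of them is cyclotomic; the cyclotomic intermediate field is the \emph{fourth} one, $FM'$ with $[M':M_3]=3$. The paper's actual argument is a local one: $FM'/FM_3$ is ramified (since $M'/\Q$ is totally ramified while $FM_3/M_3$ is unramified), so if all three $E_i$ were also ramified, every intermediate field of the $C_3\times C_3$ extension $J'/FM_3$ would be ramified, forcing $J'/FM_3$ to be totally and tamely ramified at $p$ --- impossible, because tame inertia is cyclic and $C_3\times C_3$ is not. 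This cyclicity-of-tame-inertia argument is the idea your write-up lacks. Smaller gaps in the same vein: in Case~(i) the identification $E=NM$ requires showing that $E\ne NM$ would produce an unramified $C_3\times C_3$ central extension $EN$ of $FM$ with $EN\cap FM\Q^{ab}=FM$, contradicting the surjection from $H_2=C_3$; and the restriction $M\subseteq\Q(\zeta_{p,3'})$ in Case~(i) comes from the cyclicity of $E/F$ (two distinct degree-3 subfields over $F$ would make $NM/F$ non-cyclic), not merely from the order of the Schur multiplier being coprime to the $3'$-part.
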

\begin{proof}
The existence and uniqueness of $N$ is given by Lemma \ref{c1}. For
the entirety of the proof, we consider representations $\rho$ for
which $F_{\rho}=F$, with $F$ fixed. Let $E$ be a candidate for the fixed field of $\ker{\rho}$. By Theorem \ref{thf}, there exists $M \subset
\Q(\zeta_{p^{\infty}})$ such that $[E:FM]\leq 3$, and $E \cap
\Q^{ab}=M$. We claim that $E\neq FM$. If $E$ were contained in
$FM$ with $[E:F]=2^km, m$ odd, then we would have $\Eva(E/\Q)\cong
(C_3 \times C_3) \rtimes (C_{2^k}\times C_m)$, with $C_{2^k}\times C_m$
acting on $C_3 \times C_3$ through its $C_4$ quotient. Thus $C_3
\times C_3 \times C_m$ is an abelian normal subgroup of $\Eva(E/\Q)$
of index $2^k$. By \cite[Corollary 53.18]{CR}, the degree of every
irreducible representation of $\Eva(E/\Q)$ divides $2^k$. In
particular, $\Eva(E/\Q)$ has no irreducible three-dimensional
representations, a contradiction.

\par The proof will be complete upon establishing the following two
claims.
\newline

\textbf{Claim I}: If $E/FM$ is unramified, then we are in Case i) of the
proposition.
\newline
\par
\textbf{Claim II}: If $E/FM$ is ramified, then we are in Case ii) of the
proposition. 
\newline
\par
\underline{Proof of Claim I.}
We are assuming that $E/FM$ is unramified. Being a composite of two central extensions of $F$, $NM$
is a central extension of $F/\Q$ as well (see Figure \ref{fig}).

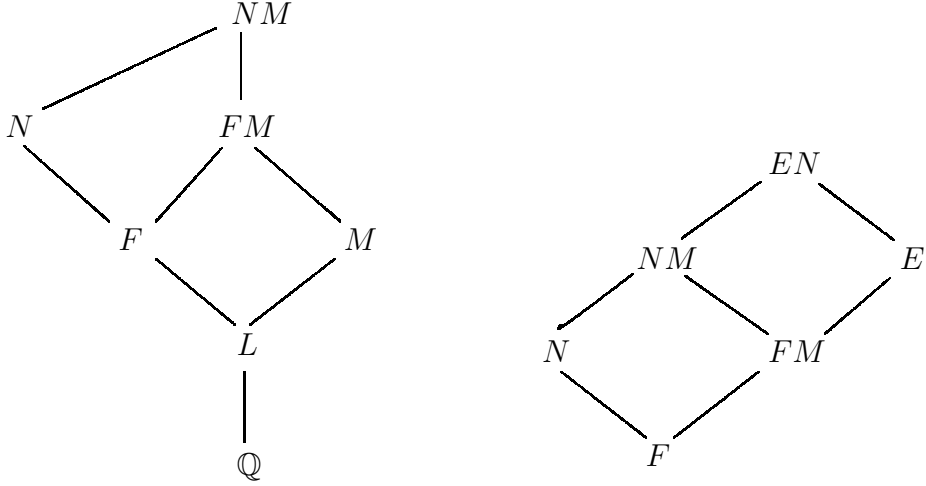
\begin{figure}[h]
\begin{center}
  \setlength{\unitlength}{2.5cm}
  
 \begin{picture}(1,2.5)(3.2,0)
 \put(3.03,0){$\Q$}
    \put(3.03,.64){$L$}
    \put(3.6,1.2){$M$}
    \put(2.4,1.2){$F$}
    \put(2.93, 1.8){$FM$}
     \put(3,2.4){$NM$}
    \put(1.8,1.8){$N$}
   
   \qbezier(3.05,2.00)(3.05,2.00)(3.05,2.35)
    \qbezier(3.07,.18)(3.07,.18)(3.07,.55)
    \qbezier(3.02,.8)(3.02,.8)(2.6,1.15)
     \qbezier(3.1,.8)(3.1,.8)(3.55, 1.15)
    \qbezier(2.6, 1.35)(2.6,1.35)(2.95,1.74)
    \qbezier (3.58, 1.35)(3.58,1.35)(3.13, 1.74)
    \qbezier(2.35, 1.35) (2.35, 1.35)(1.9, 1.75)
    \qbezier(2.0, 1.95)(2.0, 1.95)(2.92,2.38)
\end{picture}
\begin{picture}(1,2)(5,0)
\put(5.95,.05){$F$}
\put(5.4,.6){$N$}
\put(6.6,.6){$FM$}
\put(5.9,1.1){$NM$}
\put(6.6,1.6){$EN$}
\put(7.3, 1.1){$E$}
\qbezier(6.1,.2)(6.1,.2)(6.55,.55)
\qbezier(5.95,.2)(5.95,.2)(5.5,.55)
\qbezier(5.5,.8)(5.4,.7)(5.88,1.07)
\qbezier(6.16,1.06)(6.16,1.06)(6.6,.75)
\qbezier(6.14,1.26)(6.14,1.26)(6.56,1.56)
\qbezier(6.9,.75)(6.9,.75)(7.25,1.05)
\qbezier(6.87,1.55)(6.87,1.55)(7.27,1.25)
\end{picture}
\end{center}
\caption{Field diagrams for proof of Proposition \ref{mainth}.} \label{fig}
\end{figure}

If $E\neq NM$, then $EN$ is an unramified central extension of
$FM$. Since $F/L$ is unramified, if $EN/FM$ is unramified, then
$EN\cap FM \Q^{ab}=FM$ with $[EN:FM]=9$, which contradicts Theorem
\ref{thf}. So $E=NM$, and we have only to show that $M \subseteq
\Q(\zeta_{p,3'})$. Suppose that $3 \mid [M:\Q]$. Then $NM/F$,
having two intermediate subfields of degree $3$ over $F$, is not
cyclic. Thus we must have $M\subseteq \Q(\zeta_{p,{3'}})$.
\newline
\par
\underline{Proof of Claim II.} Let $M_3=M \cap \Q(\zeta_{p,3})$ and
$M_{3'}=M \cap \Q(\zeta_{p,3'})$, so $M=M_3M_{3'}$. Since $E/FM_3$ is cyclic
(recall that $E/F$ is cyclic) and $[E:FM_3]$ is divisible by $3$ but
not by $3^2$, there exists a degree 3 extension $E_3$
of $FM_3$ such that $E=E_3M$. Note that $E_3M=E_3M_{3'}$. An elementary ramification argument
shows that $E/FM_3$, and thus $E_3/FM_3$, is totally ramified.
 Define
\[\Sigma_{M_3}=\{\textrm{Central extensions }J \textrm{ of } F/\Q
\textrm{ containing $M_3$}: J/FM_3 \textrm{ is
  ramified at $p$ and only at $p$, } \]
\[[J:FM_3]=3, \textrm{ and } J\cap
\Q^{ab}=M_3 \}.
\]
We are assuming $E_3/FM_3$ is ramified, so $E_3 \in \Sigma_{M_3}$. From Lemma
\ref{maxcent}, we have $\#\Sigma_{M_3} \leq 3$. To complete the proof
of Claim II, we need to show that $\#\Sigma_{M_3}
\leq 2$. From the proof of Lemma \ref{maxcent}, the compositum $J'$ of all
elements of $\Sigma_{M_3}$ is contained in a $C_3 \times C_3$ extension
of $FM_3$. We may assume $\#\Sigma_{M_3}=3$, and thus that $J'$ is equal to a $C_3
\times C_3$ extension of $FM_{3}$. The proof of Lemma \ref{maxcent} also
tells us that $FM'\subset J'$, where $M'\subset
\Q(\zeta_{p^{\infty}})$ and $[M':M_3]=3$. Since $F/L$ is unramified,
$FM_3/M_3$ is unramified as well, from which it follows that $FM'/FM_3$ is
ramified, as $M'/\Q$ is totally ramified. If $\#\Sigma_{M_3}=3$, then all four
intermediate subfields of $J'/FM_3$ are ramified over $FM_3$, and thus
$J'/FM_3$
is totally
ramified. This is a contradiction, however, since the corresponding
extensions of local fields at primes above $p$ would be totally tamely
ramified but not cyclic. 
\end{proof}

The next goal is to count the number of representations $\Eva_{\Q}\to
\GL_3(\C)$ with projective image isomorphic to $P_1$ and given prime
power conductor. We must have
$p\equiv 1 \pmod{4}$, and we will also assume (as is necessary for such
a representation to exist), that the $3$-rank $k$ of the
class group $\Cl(L)$ of $L$ is at least 2. Then $L$ admits $r_{3,2}(k)$
unramified extensions $F$ with $\Eva(F/L)=C_3 \times C_3$. 
\par Let $J=\Eva(N/\Q)$, where $N$ is as in Case i) of Proposition \ref{mainth}. Since
the 3-Sylow subgroup $\Eva(N/L):=A$ of $J$ is normal in
$J$, the group $J$ splits as
$A\rtimes C_4$. If $A$ is abelian, then $J$ is metabelian (i.e., the
second commutator group of $G$ is trivial), which gives a
contradiction since metabelian groups are monomial \cite[Theorem 52.2]{CR}, and $\rho$ is primitive. Suppose that $A$ is isomorphic to the
non-trivial semi-direct product $C_9 \rtimes C_3$. A generator of $C_4$ acts on $J/Z(J)$
as an automorphism of order 4 and thus must act on $A$ as an
automorphism of order a multiple of 4. But an exercise in
group theory shows that $|\Aut(C_9 \rtimes \nobreak
C_3)|=54$, so no such automorphism exists. Thus $A$ must be the
other non-abelian group of order 27, $(C_3 \times C_3)\rtimes
C_3$. A computer calculation \cite{GAP4} shows the group $J$ has 8 irreducible three-dimensional
representations. Let $S$ denote the set of these representations. As an example, we determine the conductors of these
representations-- that is, the conductors of the representations of $\Eva_{\Q}$ for which
the fixed field of the kernel is $N$. Recall the definition of the Artin conductor
$N(\rho)$ of a representation $\rho$ from
Section \ref{intro}. We have $J_{0}=C_4$, $J_i=0, i \geq 1$. Using
\cite{GAP4}, one
finds that for 2 of the 8 aforementioned representations, $\dim V^{J_0}=0$, and for the other 6, 
dim$V^{J_0}=1$, where $V\cong \C^3$ is the representation space. So we
get two representations with conductor $p^3$ and 6 with conductor $p^2$.
\par Suppose now that we are in Case i) of Proposition \ref{mainth},
so the fixed field of the kernel of a
representation $\rho$ is of the form $NM$, with  $M\subset
\Q(\zeta_{p,{3'}})$. The irreducible
three-dimensional representations of $\Eva(NM/\Q)$ are given by $\psi
\otimes \chi$, where $\psi$ is an element of $S$ and $\chi$ is a character of $\Eva(M/L)$, of which there
are $[M:L]$ (by abuse of notation, we are allowing $\rho$ and $\chi$
to also denote their respective lifted representations to the group
$\Eva(NM/\Q)$; furthermore, we are identifying characters of $G(M/L)$
with characters of $G(M/\Q)$ modulo characters of $G(L/\Q)$). To see this, note that the sum of the squares of the
degrees of the irreducible representations of $\Eva(NM/\Q)$ is equal to
the order of the group. Consider the irreducible representations of $\Eva(NM/\Q)$ the
form $\psi \otimes \chi$, where $\psi$ ranges over all
irreducible representations of $\Eva(N/\Q)$ and $\chi$ ranges over all irreducible
representations of $\Eva(M/L)$. The sum of the squares of the degrees
of these representations is the order of $\Eva(NM/\Q)$. If we can show
that these representations are distinct as $\psi$ and $\chi$ vary,
then it will follow
that these are all the irreducible representations of
$\Eva(NM/\Q)$. This amounts to showing that for distinct
representations $\psi$ and $\psi'$ of $G(N/\Q)$ we cannot have $\psi
\otimes \chi \cong \psi'$. One can see this by considering the restrictions of
both representations to $G(NM/N)$. 
\par Thus we find that the number of irreducible representations of $\rho$
of dimension three with $\ker{\rho}$ having fixed field $NM$ is
$8[M:L]$. 
\par Continuing with Case i) of Proposition \ref{mainth}, we now fix a $C_3 \times C_3$ extension $F$ of $L$ and count the
number of $\rho$ for which $N(\rho)= p^m$ and $F_{\rho}=F$. Write
$[M:\Q]=4cp^{n}$, with $c\mid \frac{p-1}{4}$, so \newline $[M:L]=p^nc$. Let
$G=\Eva(NM/\Q)$, and let $G_i$, with order $g_i$,
denote the lower
ramification groups of $NM/\Q$. (Technically, we mean the lower ramification
groups of the extension $NM_{\p}/\Q_p$ of local fields where $\p$ is a
prime of $NM$ lying
above $p$. Since $NM/\Q$ is Galois and ramified only at $p$, there is
no ambiguity created by this language, and we use it henceforth when
we have a Galois extension of $\Q$ ramified at a single rational prime). As soon as $M \nsubseteq N$, we get
non-trivial central elements in $G_i$ whenever $G_i\neq 1$. Such
elements act by scalars and thus do not fix any non-zero subspace of $V$. So
for such $\rho$, $v_p(N(\rho))=\sum_{i\geq 0, g_i\neq 1}\frac{g_i}{g_0}\cdot 3$. We
must compute the $g_i$. We know $g_0=4cp^n, g_1=p^n$. 
\begin{lemma}\label{lemram}
Let $E/\Q$ be a Galois extension unramified outside $p\infty$ and
tamely ramified at $p$. Let $M$ be subfield of $\Q(\zeta_{p^n})$ that
is not contained in $\Q(\zeta_{p^{n-1}})$. Then for $i\geq 1$, the $i$th
lower ramification groups of $\Eva(EM/\Q)$ have the same order as the
respective lower ramification groups of $\Q(\zeta_{p^{n}})/\Q$. 
\end{lemma}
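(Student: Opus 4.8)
The plan is to reduce everything to a local computation at $p$ and to carry out the comparison in the \emph{upper} numbering, where the statement is clean and robust, before translating back to the lower numbering. Since $EM/\Q$ is unramified away from $p$, the groups $G_i$ for $i\ge 1$ are just the ramification groups of the local extension at a prime $\p\mid p$, so I would fix such a $\p$ and work with the completions $\mathcal{E}=E_{\p}$, $\mathcal{M}=M_{\p}\subseteq\Q_p(\zeta_{p^n})$, and $\mathcal{F}=\mathcal{E}\mathcal{M}$. First I would record the role of the hypothesis on $M$: since $\Eva(\Q(\zeta_{p^n})/\Q)\cong C_{p-1}\times C_{p^{n-1}}$ and $\Eva(\Q(\zeta_{p^n})/\Q(\zeta_{p^{n-1}}))$ is its unique subgroup of order $p$, the condition $M\not\subseteq\Q(\zeta_{p^{n-1}})$ is equivalent to $p\nmid[\Q(\zeta_{p^n}):M]$; that is, $M$ contains the totally (and wildly) ramified degree-$p^{n-1}$ subfield $B_n$ of $\Q(\zeta_{p^n})$, and $\Q(\zeta_{p^n})/M$ is tame.

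The heart of the argument is the upper-numbering comparison: for every $u>0$ one has $|G^u(\mathcal{F}/\Q_p)|=|\Eva(\Q_p(\zeta_{p^n})/\Q_p)^u|$. I would prove this in three short steps, using only Herbrand's theorem (compatibility of the upper numbering with quotients) and the standard fact that for $u>0$ the upper group of a local extension lies in its wild inertia, hence is a $p$-group. \emph{(i)} Because $\mathcal{E}/\Q_p$ is tame, $\Eva(\mathcal{E}/\Q_p)^u=1$ for $u>0$, so by Herbrand the image of $G^u(\mathcal{F}/\Q_p)$ in $\Eva(\mathcal{E}/\Q_p)$ is trivial; thus $G^u\subseteq\Eva(\mathcal{F}/\mathcal{E})$. \emph{(ii)} For the quotient onto $\Eva(\mathcal{M}/\Q_p)$ the kernel is $\Eva(\mathcal{F}/\mathcal{M})$, and since $G^u\subseteq\Eva(\mathcal{F}/\mathcal{E})$ while $\Eva(\mathcal{F}/\mathcal{E})\cap\Eva(\mathcal{F}/\mathcal{M})=\Eva(\mathcal{F}/\mathcal{E}\mathcal{M})=1$, the restriction $G^u\to\Eva(\mathcal{M}/\Q_p)^u$ is injective; by Herbrand it is also surjective, hence an isomorphism for $u>0$. \emph{(iii)} Since $\Q_p(\zeta_{p^n})/\mathcal{M}$ is tame, its Galois group has order prime to $p$ and so meets the $p$-group $\Eva(\Q_p(\zeta_{p^n})/\Q_p)^u$ trivially, giving $\Eva(\Q_p(\zeta_{p^n})/\Q_p)^u\xrightarrow{\ \sim\ }\Eva(\mathcal{M}/\Q_p)^u$ for $u>0$. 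Composing \emph{(ii)} and \emph{(iii)} proves the claim, so that the upper-numbering filtration of $\mathcal{F}/\Q_p$ coincides with that of $\Q_p(\zeta_{p^n})/\Q_p$ above $0$.

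Finally I would translate to lower numbering via $|G_i|=|G^{\varphi(i)}|$, where $\varphi=\varphi_{\mathcal{F}/\Q_p}$, and likewise for the cyclotomic field. The main obstacle is exactly this passage: the Herbrand function (equivalently, the location of the lower jumps $\psi(k)$) depends not only on the wild data just computed but also on the order $t$ of the \emph{tame} inertia of $\mathcal{F}/\Q_p$, through the first slope of $\psi$. A direct computation of $\psi_{\mathcal{F}/\Q_p}$ from the upper data places the lower jumps at $i=t\,(p^k-1)/(p-1)$, against $i=p^k-1$ for $\Q_p(\zeta_{p^n})/\Q_p$; these agree for all $k$ precisely when $t=p-1$, i.e. when $\Q_p(\zeta_p)\subseteq\mathcal{F}$. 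The clean way to finish is therefore to verify that in the present situation the tame inertia of $EM/\Q$ equals $p-1$, after which the two Herbrand functions coincide and $|G_i|=|C_i|$ for every $i\ge1$. I would flag controlling this tame inertia as the one genuinely delicate point; note moreover that, whatever $t$ is, the factor $t$ cancels between $g_0$ and the block sums, so the weighted quantity $\sum_{i\ge0}g_i/g_0$ entering the Artin conductor is independent of $t$, which is all that the conductor computation following the lemma actually uses.
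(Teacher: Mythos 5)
Your upper-numbering argument, steps (i)--(iii), is exactly the paper's proof: the paper's two displayed Herbrand identities $(G/H)^i=G^iH/H=G^i$ and $(G'/H')^i=G'^i$ are your steps (ii) and (iii), and your step (i) supplies the justification for $G^i\cap H=1$ that the paper leaves implicit (the phrase ``since $G^i$ is a $p$-group'' covers the cyclotomic side, but for $H=\Gal(EM/M)$, whose order may be divisible by $p$, one needs your observation that $G^i\subseteq\Gal(EM/E)$ and $\Gal(EM/E)\cap\Gal(EM/M)=1$). So up to and including the identification of the upper filtrations above $0$, you and the paper agree.

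The point you flag as delicate is not merely delicate: it is a genuine error in the paper. The sentence ``the order of the lower ramification groups is determined by the order of the upper ramification groups'' is false, because $\psi'(u)=[G^0:G^u]$ depends on $|G_0|$, which the argument does not control; your placement of the lower jumps at $t(p^k-1)/(p-1)$ versus $p^k-1$ is correct, so the lemma's conclusion holds only when the tame inertia degree $t$ of $EM/\Q$ at $p$ equals $p-1$. A concrete counterexample to the lemma as stated: for $p\geq 5$ take $E=\Q(\sqrt{p^*})$ and $M$ the degree-$p$ subfield of $\Q(\zeta_{p^2})$; then $\Gal(EM/\Q)\cong C_{2p}$ has $G_1=G_2\cong C_p$ and $G_3=1$, while $\Q(\zeta_{p^2})/\Q$ has $G_3'\cong C_p$. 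Your proposed repair --- verifying $t=p-1$ --- cannot succeed in the paper's applications, where $t=4c$ (resp.\ $12c$, $3cx$) with $c$ allowed to be any divisor of $\frac{p-1}{4}$. Your closing observation is the correct salvage and the form in which the lemma should be stated: $\sum_{i\geq 0,\,g_i\neq 1}g_i/g_0$ equals $1$ plus the number of positive upper jumps, independently of $t$, since each wild block of length $tp^k$ contributes exactly $1$ to the sum. Be aware, however, that this is not a cosmetic correction to the paper: applying the false lemma, the paper evaluates this sum as $1+n\frac{p-1}{4c}$, so its formula $v_p(N(\rho))=3+3n\frac{p-1}{4c}$ (and the analogues in Case ii) and in the $P_3$ section) should read $v_p(N(\rho))=3+3n$, which alters the subsequent conductor-by-conductor counts.
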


\begin{proof}
Let $G=\Eva(EM/\Q),
G'=\Eva(\Q(\zeta_{p^{n}})/\Q)$, with upper ramification groups $G^i$
and $G'^i$, respectively, and let
$H=\Eva(EM/M), H'=\Eva(\Q(\zeta_{p^{n}})/M)$. Since $G^i$ is a $p$-group, for all
$i>0$,
\begin{equation}
(G/H)^i=G^iH/H=G^i/G^i\cap H=G^i \textrm{ and} \label{upper1}
\end{equation}
\begin{equation}
(G'/H')^i=G'^iH'/H'=G'^i/G'^i\cap H' =G'^i, \label{upper2} 
\end{equation}
where the first equality in both lines is a
property of upper ramification groups (see \cite[Chapter IV]{MR554237} for
details about the upper and lower ramification groups). The order of
the lower ramification groups is determined by the order of the upper ramification
groups. Since $(G/H)^i=(G'/H')^i$, \eqref{upper1} and
\eqref{upper2} imply
that $|G_i|=|G'_i|$ for all $i\geq 1$ .
\end{proof}
Let $G':=\Eva\big(\Q(\zeta_{p^{n+1}})/\Q\big)$. As shown in \cite[Chapter IV.4]{MR554237},
for any $0 \leq k \leq n$,

\[
|G'_i|=p^k \textrm{\quad for\quad } p^{n-k}\leq i \leq p^{n-k+1}-1, 
  \quad i \in \Z.
\]

By Lemma \ref{lemram}, $|G_i|=|G'_i|$ for all $i\geq 1$. So $|G_i|= p^k$ for $p^{n-k}\leq i \leq p^{n-k+1}-1$ and $0\leq k \leq n$,
and $|G_i|=1$ for $i\geq p^n$. Thus if $\ker \rho$ has fixed field
$NM$, 
\[
v_p\big(N(\rho)\big)=
3\left(1+\frac{p-1}{4c}+\frac{p(p-1)}{4cp}+\cdots + \frac{p^{n-1}(p-1)}{4cp^{n-1}}\right)=3+3n\frac{p-1}{4c},
\]
and there are $8p^nc$ representations with kernel fixing $NM$. 
\par We are now in a position to estimate the number $a_{m,i}$ of $\rho$
for which we are in Case i) of Proposition \ref{mainth} and for which $N(\rho)= p^m$ and $F_{\rho}=F$. If $m \equiv 1$ or $2
\pmod{3}$, then $a_{m,i}=0$. Suppose $m \equiv 0 \pmod{3}$. Let
$d=\gcd(\frac{p-1}{4},k)$, where $m=3k+3$. Then
\[
a_{m,i}=8p^k\frac{p-1}{4}+\cdots +
8p^{\frac{k}{d}}\frac{p-1}{4d}=8\sum_{j\mid d}p^{\frac{k}{j}}\frac{p-1}{4j}.
\]
The first term in the sum, for example, corresponds to the case
$c=\frac{p-1}{4},n=k$, in which case there exist $8p^k\frac{p-1}{4}$
representations $\rho$ with
$v_p\big(N(\rho)\big)=3+3n\frac{p-1}{4c}=3+3k$.

For the total number $A_{m,F,i}$ of representations
$\rho$ in Case i) of Proposition \ref{mainth} with
$F_{\rho}=F$ and $N(\rho) \mid p^m$, we have 
\[
A_{m,F,i} \sim \sum_{0 \leq k \leq \frac{m-3}{3}}2p^{k+1} \sim 2p^{\frac{m}{3}},
\]
where we are viewing $A_{m,F,i}$ as a function of $p$.

\par Suppose now that we are in Case ii) of Proposition
\ref{mainth}. We can write the fixed field of $\ker{\rho}$ as $E_3M_{3'}$,
with $M_{3'}\subset \Q(\zeta_{p,3'})$ and $E_3$ a degree 3 ramified
extension of $FM_3$ not contained in $F\Q(\zeta_{p,3})$, with $M_3 \subseteq \nobreak \Q(\zeta_{p,3})$. Let
$[M_3:L]=3^a$. The group $\Eva(E_3/\Q)$ splits as  $B \rtimes
\nobreak C_4$, where $B=\Eva(E_3/L)$ is a $3$-group. \newline
\par
\textbf{Claim}: $B\cong
(C_{3^{a+1}}\times C_3)\rtimes C_3$, where $Z(B)$ is the first
$C_{3^{a+1}}$ factor, which is also the center of $\Eva(E_3/\Q)$ (a generator of
the last $C_3$ factor acts on $C_{3^{a+1}}\times C_3$ by fixing the
$C_{3^{a+1}}$ factor and mapping $(0,1) \mapsto (3^a,1)$). \newline
\par
\underline{Proof of Claim.} The same
reasoning that showed $A\cong (C_3 \times C_3) \rtimes C_3$ in the
discussion immediately following the proof of Proposition \ref{mainth}
shows that $\Eva(E_3/M_3)\cong (C_3 \times C_3) \rtimes C_3$. Since
$E_3/F$ is cyclic, it follows
that if $F'$ is a field strictly between $L$ and $F$, then
$\Eva(E_3/F')$ has rank two (see Figure \ref{fig'}). The group $\Eva(E_3/F')$ is abelian since
its center has index at most three. Thus $\Eva(E_3/F')\cong
C_{3^{a+1}}\times C_3$, and it is normal in $B$ since $F'/L$ is
Galois. Since $\Eva(E_3/M_3)\cong (C_3 \times C_3) \rtimes C_3$, it has
a subgroup $C$ of order three not contained in $\Eva(E_3/F')$, and thus
$B$ splits as $\Eva(E_3/F') \rtimes C$, as claimed.
\newline

\begin{figure}[h]
\begin{center}
  \setlength{\unitlength}{2.5cm}
  
 \begin{picture}(1,2.5)(2.5,0)
 \put(3.03,0){$\Q$}
    \put(3.03,.64){$L$}
    \put(3.6,1.2){$M_3$}
    \put(2.4,1.2){$F$}
    \put(2.93, 1.8){$FM_3$}
     \put(3,2.2){$E_3$}
    \put(3.9,1.65){$M_3M_{3'}$}
    \put(3.4,2.5){$E_3M_{3'}$}
    \put(2.76,.98){$F'$}
   
   \qbezier(3.05,1.98)(3.05,1.98)(3.05,2.15)
    \qbezier(3.07,.18)(3.07,.18)(3.07,.55)
    \qbezier(3.02,.8)(3.02,.8)(2.85,.95)
    \qbezier (2.72,1.05)(2.72,1.05)(2.56,1.18)
     \qbezier(3.1,.8)(3.1,.8)(3.55, 1.15)
    \qbezier(2.6, 1.35)(2.6,1.35)(2.95,1.74)
    \qbezier (3.53, 1.35)(3.53,1.35)(3.13, 1.74)
    \qbezier(3.75, 1.35) (3.75, 1.35)(3.93, 1.56)
    \qbezier(4.03, 1.8)(4.03, 1.8)(3.56,2.41)
    \qbezier (3.17,2.3)(3.17,2.3)(3.37,2.5)

\end{picture}
\end{center}
\caption{}\label{fig'}
\end{figure}

\par We would now like to count the number of irreducible
representations of $\Eva(E_3/\Q)\cong B \rtimes C_4$. While counting representations
from Case i) of Proposition \ref{mainth}, we saw that \newline $((C_3 \times
C_3)\rtimes C_3)\rtimes C_4\cong((3^aC_{3^{a+1}}\times C_3)\rtimes
C_3)\rtimes C_4:=B'\rtimes C_4$ has 8
irreducible three-dimensional representations. If $\psi$ is an
irreducible representation of $B'\rtimes C_4$, and $\psi$ maps the
generator $3^a \in 3^a\Z/3^{a+1}\Z$ of the
$3^aC_{3^{a+1}}$ factor to the scalar $\alpha \in \GL_3(\C)$, then
mapping the generator $1 \in \Z/3^{a+1}\Z$ of the $C_{3^{a+1}}$ factor
to any of $3^a$th roots of $\alpha$ gives a well
defined irreducible representation of $B \rtimes C_4$, and distinct
choices of $3^a$th roots of $\alpha$ give distinct
representations. Using the fact that the sum of the squares of the
degrees of the irreducible representations is the order of the group,
we see that we obtain \textit{all} irreducible representations of $B
\rtimes C_4$ in this way. It follows that the number of irreducible
three-dimensional representations of $\Eva(E_3/\Q)$ is $8\cdot
3^a$. Recall that in Case ii), the fixed field of the kernel of our given
representation $\rho$ is of the form $E_3M_{3'}$, where $M_{3'} \subseteq
\Q(\zeta_{p,3'})$. The number of irreducible
three-dimensional representations of $\Eva(E_3M_{3'}/\Q)$ is thus $8\cdot
3^a \cdot [M_3M_{3'}:M_3]$. 
\par Write $[E_3M_{3'}:\Q]=108p^nc$, with $(c,p)=1$. Then our number $8
\cdot 3^a \cdot [M_3M_{3'}:M_3]$ from above is equal to $8p^nc$. Let $G_i$, with order $g_i$, be the lower
ramification groups of $\Eva(E_3M_{3'}/\Q)$. For all $i\geq 0$,
$G_i$ has non-trivial central elements, so $\textrm{dim}V/V^{G_i}=3$ whenever
$g_i\neq 1$. So again we obtain
\begin{equation}
v_p\left(N(\rho)\right)= \sum_{i \geq 0, g_i \neq 1} \frac{g_i}{g_0}\cdot 3, \label{eqN}
\end{equation}
and we have to compute the $g_i$. We have $g_0=12p^nc$. It follows from
Lemma
\ref{lemram} that the $g_i$ for Case ii) are
the same as those for Case i). We are thus able to compute, analogously to Case i),
\[
v_p\big(N(\rho)\big)=
3+3n\frac{p-1}{12c}.
\]
In the same way that we bounded $a_{m,i}$, we can now bound the number $a_{m,ii}$ of $\rho$ for which we are in
Case ii) of Proposition \ref{mainth} and for which $N(\rho)=p^m$ and
$F_{\rho}=F$.  By \eqref{eqN}, $a_{m,ii}=0$
unless $m \equiv 0 \pmod{3}$, so write $a_{m,ii}=3k+3$. A similar series
of computations to those in Case i) gives
\[
a_{m,ii}=8p^k\frac{p-1}{4}+\cdots +
8p^{\frac{k}{d}}\frac{p-1}{12d}=8\sum_{j\mid
  d}p^{\frac{k}{j}}\frac{p-1}{12j} \textrm{ ,} \quad d=\gcd \left(\frac{p-1}{4},k \right).
\]
When we are in Case ii), for a fixed $M_3$ and $F$, there are two
possible degree 3 extensions of $FM_3$ that are candidates for the
kernel of the fixed field of the representation $\rho$, so we obtain
\[
A_{m,F,ii} \sim \sum_{0 \leq k \leq \frac{m-3}{3}}
  \frac{4}{3}p^{k+1}\sim \frac{4}{3}p^{\frac{m}{3}}.
\]

\par To bound the total number $B_{m}$ of primitive representations $\rho$ with
$N(\rho)\mid p^m$ and \mbox{$\Imm(\pi \circ \rho)=P_1$}, we need to multiply $A_{m,F,i}+A_{m,F,ii}$ by the
number of unramified $C_3
\times C_3$ extensions of $L$, which, as previously discussed, is
$r_{3,2}(k)$, where $k$ is the $3$-rank of $\Cl(L)$. From
\cite[page 95]{moonmonomial}, the $3$-part of $h_L$ is less than $\frac{22.2p^3}{\pi^4}$, so $k \leq
\log_3(\frac{22.2p^3}{\pi^4}):=R$. 
\par We introduce the following (non-standard) notation. Suppose that
$A$ and $B$ are both positive, real-valued functions of $n$.
\[ \textrm{We write }
A\lessapprox B \textrm{ to mean }
\limsup_{n \to \infty}\frac{A(n)}{B(n)}\leq 1. 
\]
Using this notation,
we obtain:
\begin{theorem}\label{t4}
Notation as above, $$
B_m \lessapprox (A_{m,F,i}+A_{m,F,ii})r_{3,2}(R)\sim \frac{4928.4}{3^5 \pi^8}p^{\frac{m}{3}+9}.$$
                               
\end{theorem}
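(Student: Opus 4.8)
The plan is to organize the count of $B_m$ by the intermediate field $F=F_\rho$. Any primitive $\rho$ with $\Imm(\pi\circ\rho)\cong P_1$ and $N(\rho)\mid p^m$ has $\Eva(F_\rho/\Q)\cong P_1$; since the derived subgroup $P_1'=C_3\times C_3$ fixes the (unique, fixed) degree-four subfield $L\subset\Q(\zeta_p)$, the field $F$ is an unramified $C_3\times C_3$ extension of $L$. I would therefore first fix such an $F$, count the $\rho$ with $F_\rho=F$, and then sum over the admissible $F$. Because an unramified $C_3\times C_3$ extension of $L$ need not be Galois over $\Q$ with group $P_1$, the number of admissible $F$ is at most the total number of unramified $C_3\times C_3$ extensions of $L$, which by class field theory is $r_{3,2}(k)$, where $k$ is the $3$-rank of $\Cl(L)$.

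For a single fixed $F$, Proposition \ref{mainth} shows that the fixed field $E_\rho$ of $\ker\rho$ falls into Case i) or Case ii), and these cases are mutually exclusive (distinguished by whether $E/FM$ is unramified or ramified), so the partition of the relevant $\rho$ by the pair (value of $F_\rho$, case) is exact. The number of $\rho$ with $F_\rho=F$ and $N(\rho)\mid p^m$ is thus $A_{m,F,i}+A_{m,F,ii}$. The conductor and ramification-group computations carried out above give these counts uniformly in $F$, namely $A_{m,F,i}\sim 2p^{m/3}$ and $A_{m,F,ii}\sim\tfrac43 p^{m/3}$, so summing over the admissible $F$ yields $B_m=\sum_F(A_{m,F,i}+A_{m,F,ii})\lessapprox (A_{m,F,i}+A_{m,F,ii})\cdot\#\{F\}$.

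I would then bound $\#\{F\}$. Since $r_{3,2}$ is increasing and the $3$-rank satisfies $3^{k}\le\tfrac{22.2p^{3}}{\pi^{4}}$, the bound on the $3$-part of $h_L$, we have $k\le R$, whence $\#\{F\}\le r_{3,2}(R)$; this gives the first claimed estimate $B_m\lessapprox (A_{m,F,i}+A_{m,F,ii})\,r_{3,2}(R)$. The displayed closed form is then obtained by a routine substitution: insert the class number bound for $3^{R}$ into the leading term $r_{3,2}(R)\sim 9^{R}/48$ and multiply by $A_{m,F,i}+A_{m,F,ii}\sim\tfrac{10}{3}p^{m/3}$, collecting the numerical constant and the power of $p$ as a function of $p$ under the $\lessapprox$ convention.

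The genuine content of this theorem lies in the earlier results it assembles---Proposition \ref{mainth}, the determination of the groups $\Eva(N/\Q)$ and $\Eva(E_3/\Q)$ with their eight and $8\cdot 3^a$ three-dimensional representations, and the conductor computations producing $A_{m,F,i}$ and $A_{m,F,ii}$---so I do not expect a deep obstacle here. The two points requiring care are verifying that grouping by $F_\rho$ and then by case partitions the set being counted with no over- or under-counting (so that the product $(A_{m,F,i}+A_{m,F,ii})\cdot\#\{F\}$ is a legitimate upper bound), and checking that the per-$F$ asymptotics are sufficiently uniform in $F$ to justify replacing each summand by its leading term. The single quantitative input that drives the final exponent and constant is the class number bound on the $3$-rank of $\Cl(L)$.
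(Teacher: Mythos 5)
Your proposal follows the paper's argument exactly: the paper likewise bounds $B_m$ by fixing $F$, using the two (mutually exclusive) cases of Proposition \ref{mainth} to get the per-field count $A_{m,F,i}+A_{m,F,ii}\sim\tfrac{10}{3}p^{m/3}$, and multiplying by the number $r_{3,2}(k)\le r_{3,2}(R)$ of unramified $C_3\times C_3$ extensions of $L$, with $R=\log_3\bigl(22.2p^{3}/\pi^{4}\bigr)$ coming from the bound on the $3$-part of $h_L$. The only point worth double-checking is the final arithmetic: carrying out the substitution as you describe, $r_{3,2}(R)\sim 9^{R}/48=\tfrac{492.84\,p^{6}}{48\pi^{8}}$ times $\tfrac{10}{3}p^{m/3}$ gives $\tfrac{4928.4}{144\pi^{8}}p^{m/3+6}$, which does not obviously reproduce the displayed $\tfrac{4928.4}{3^{5}\pi^{8}}p^{m/3+9}$, so reconcile your constant and exponent with the paper's before finalizing.
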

\subsection{Representations with projective image isomorphic to $P_3$}\label{4}
Recall that $P_3:= \big( C_3 \times C_3 \big) \rtimes \SL_2(\F_3)$. Using the notation from the previous section, we look to count the
number of $\rho$ with $\Eva(F_{\rho}/\Q)\cong P_3$ and given conductor. Our main tool will
be the following proposition.
\begin{proposition}\label{thp3}
Let $\rho:\Eva_{\Q}\to \GL_3(\C)$ be an Artin representation
unramified outside $p\infty$ with projective image isomorphic to
$P_3$. Let $F$ be the fixed field of $\ker \pi \circ \rho$. If $M \subseteq
\Q(\zeta_{p^{\infty}})$, then there are at most three degree 3 central
extensions of $F_{\rho}M/\Q$ ramified over $F_{\rho}M$ and at most one
that is unramified over $F_{\rho}M$. There exists
 $M \subseteq \Q(\zeta_{p^{\infty}})$ such that of
the four possible degree 3 central extensions of $F_{\rho}M/\Q$, the
fixed field of $\ker \rho$ must be the unramified extension. 

\end{proposition}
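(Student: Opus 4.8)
The plan is to run the argument of Proposition~\ref{mainth} again, now with $P_3$ in place of $P_1$ and with $H_2(P_3,\Z)=C_3$ supplying the Schur-multiplier bounds. Throughout I fix $M\subseteq\Q(\zeta_{p^\infty})$ and write $F=F_\rho$. As in the proof of Lemma~\ref{maxcent}, the compositum of all degree~$3$ central extensions of $FM/\Q$ that are unramified outside $p\infty$ is again central over $FM/\Q$, and Theorem~\ref{thf} (applied to $F/\Q$, where $H_2(\Eva(F/\Q),\Z)=C_3$) together with a single cyclotomic step forces this compositum to lie in one field $J'$ with $\Eva(J'/FM)\hookrightarrow C_3\times C_3$. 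Hence the ``four possible'' degree~$3$ central extensions are precisely the (at most four) intermediate fields of $J'/FM$, and both bounds are assertions about these four.

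For the count of ramified extensions I argue as in Claim~II of Proposition~\ref{mainth}: since $[J':FM]$ divides $9$ and is prime to $p$, the extension $J'/FM$ is tamely ramified at $p$, so its inertia subgroup is cyclic; were all four intermediate fields ramified over $FM$, then $J'/FM$ would be totally ramified at $p$ with Galois group $C_3\times C_3$, which is not cyclic — a contradiction. This yields at most three ramified extensions. For the unramified ones I argue as in Lemma~\ref{c1}: two distinct unramified degree~$3$ central extensions would compose to an unramified degree~$9$ central extension of $FM/\Q$; because $\Q(\zeta_{p^\infty})/\Q$ is totally ramified at $p$, every nontrivial subextension of $FM\Q^{ab}/FM$ is ramified at $p$, so this compositum would meet $FM\Q^{ab}$ only in $FM$, violating Theorem~\ref{thf} and $\lvert H_2(P_3,\Z)\rvert=3$. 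Thus there is at most one unramified extension, and it is the translate $NM$ of the unique unramified degree~$3$ central extension $N$ of $F/\Q$.

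The field $N$ is produced exactly as in Lemma~\ref{c1}: Tate's Lemma~\ref{tate} yields a lift $\rho_0$ of $\pi\circ\rho$ that is unramified over $F$ (take trivial inertia at each $l\neq p$, and at $p$ lift the tamely ramified, hence cyclic, local image so as to preserve its fixed field), while Theorem~\ref{thf} and the fact that $P_3$ has no irreducible three-dimensional representation force $[N:F]=3$. It remains to locate the given $\rho$ among the four fields and to identify it as the unramified one. Writing $G=\Eva(E_\rho/\Q)$, Schur's Lemma makes $Z(G)=\Eva(E_\rho/F)$ cyclic, and the preimage $N_\rho\trianglelefteq G$ of $C_3\times C_3\trianglelefteq P_3$ carries the faithful irreducible representation $\rho|_{N_\rho}$; since $\SL_2(\F_3)\cong G/N_\rho$ has a single orbit on the eight nonzero characters of $\F_3^2$, the group $N_\rho$ must be a non-abelian (extraspecial-type) central extension of $C_3\times C_3$ on which $\SL_2(\F_3)$ acts through the standard representation of $N_\rho/Z(N_\rho)$.

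The hard part — and the exact point at which $P_3$ behaves differently from $P_1$ — is to show that this pins $E_\rho$ to the unramified member for a suitable choice of $M$. Concretely I would prove that the standard action of $\SL_2(\F_3)$ on $C_3\times C_3$ does not extend to the enlarged central extensions of $C_3\times C_3$ (by $C_{3^{b}}$ with $b\geq 2$) that a ramified realization would require, so that the $3$-part of $Z(G)$ can only be the Schur $C_3$; the obstruction should be carried by the order-$4$ elements of the subgroup $Q_8\leq\SL_2(\F_3)$, in the same spirit as the exclusion of $C_9\rtimes C_3$ after Proposition~\ref{mainth}, and I would confirm it by a short automorphism-group computation in \cite{GAP4}. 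Granting this, the cyclotomic part $M=E_\rho\cap\Q^{ab}$ is prime to $3$, whence $E_\rho=NM$ with $N/F$ unramified; therefore $E_\rho/FM$ is unramified and $E_\rho$ is the unramified one of the four degree~$3$ central extensions of $FM/\Q$. I expect this group-theoretic step to be the main difficulty, the remainder being a faithful transcription of the $P_1$ arguments.
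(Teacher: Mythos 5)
Your bookkeeping of the four candidate extensions and the ``at most three ramified / at most one unramified'' counts does track the paper's use of Lemma \ref{maxcent}, Theorem \ref{thf}, and the cyclicity of tame inertia, and that part is essentially sound --- except that your claim that ``every nontrivial subextension of $FM\Q^{ab}/FM$ is ramified at $p$'' does not follow merely from the total ramifiedness of $\Q(\zeta_{p^{\infty}})/\Q$, since the cyclotomic ramification could a priori be absorbed into $FM/M$; the paper proves the needed instance by showing that $FM'/FM$ unramified would force $e(F/F_0)\in\{3,9\}$ and hence a cyclic subgroup of order $3e$ in $P_3$, which does not exist. A more serious problem is your construction of the unramified extension $N$ by invoking Tate's Lemma \ref{tate} ``exactly as in Lemma \ref{c1},'' lifting the local representation at $p$ because its image is ``tamely ramified, hence cyclic.'' Tame \emph{inertia} is cyclic, but the full decomposition group at $p$ in $F/\Q$ need not be; in the $P_1$ case the paper first proved that $\p$ splits completely in $F/L$, so that the local extension is totally tamely ramified and therefore cyclic, and no such statement is available for $P_3$ (here $e(F/\Q)=3$ or $6$ and the decomposition group can be non-cyclic). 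The paper flags exactly this failure in the remark following the proposition; the existence of the unramified member is instead obtained a posteriori, from the existence of $E_{\rho}$ together with the proof that $E_{\rho}/FM$ is unramified.

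That last point is the heart of the proposition, and it is precisely the step you leave as a conjectural group-theoretic obstruction to be ``confirmed by a GAP computation.'' Moreover, the obstruction you propose is not the right one: you want to rule out ramified realizations by showing that the $\SL_2(\F_3)$-action does not extend to non-abelian central extensions of $C_3\times C_3$ whose center has $3$-part $C_{3^{b}}$ with $b\geq 2$, but a ramified $E$ with $M\cap\Q(\zeta_{p,3})=L$ (the case $a=0$) has $Z\big(\Eva(E/\Q)\big)=\Eva(E/F)$ of $3$-part only $C_3$, so your obstruction simply does not apply there, and you give no argument that the unique degree-$3$ subextension $E_0$ of the cyclic extension $E/F$ is unramified. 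The paper's mechanism is different and does cover all $a\geq 0$: if $E/FM$ is ramified, the cyclic tame inertia group of $E/\Q$ at $p$ has $3$-part of order $3^{a+2}$, which forces $\Eva(E/\Q)\cong (C_3\times C_3)\rtimes(Q_8\rtimes C_{3^{a+2}})$ with $C_{3^{a+2}}$ acting through its $C_3$ quotient; the preimage of $C_3\times C_3$ is then an abelian, non-cyclic normal subgroup, and \cite[Corollary 6.13]{MR2270898} rules out a faithful primitive representation. Some version of this inertia-forces-splitting argument is indispensable; the Schur-multiplier and automorphism-group considerations you sketch do not close the case.
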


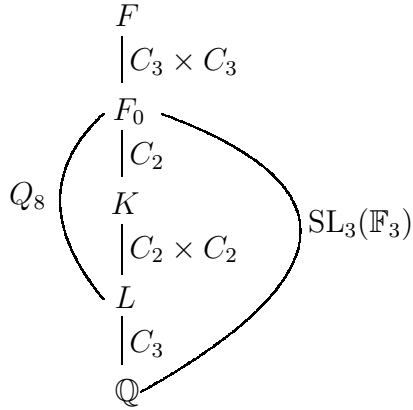
\begin{figure}[h]

  \setlength{\unitlength}{2.5cm}
  \begin{center}
 \begin{picture}(1,2.1)(2.9,0)
 \put(3.02,0){$\Q$}
    \put(3.02,.5){$L$}
    \put(3,1){$K$}
    \put(3.01,1.5){$F_0$}
    \put(3.02, 2){$F$}
    \put (3.1, .265){$C_3$}
    \put (3.1, .765){$C_2 \times C_2$}
    \put (3.1, 1.265){$C_2$}
     \put (3.1, 1.765){$C_3 \times C_3$}
     \put (2.46, 1.05){$Q_8$}
      \put(4.05, .9){$\SL_3(\F_3)$}
     
   \qbezier(2.96, .545)(2.5, 1.1)(2.96, 1.53)
   \qbezier(3.06,.2)(3.06,.2)(3.06,.45)
    \qbezier(3.06,.68)(3.06,.68)(3.06,.94)
    \qbezier(3.06, 1.2)(3.06, 1.2)(3.06, 1.44)
     \qbezier(3.06, 1.7)(3.06,1.7)(3.06, 1.94)
    \qbezier(3.16, .05)(4.8,.97)(3.27 ,1.53)

\end{picture}
\end{center}
\caption{Field diagram for Section \ref{4}.}
\end{figure}
\begin{proof} We have $H_2(P_3,
\Z)\cong C_3$. Let $E$ be a candidate for the fixed field of $\ker
\rho$ and let $M=E\cap \Q^{ab}$. By Lemma \ref{maxcent} and its proof, $FM/\Q$ admits a maximum of three distinct non-trivial
central extensions $E_i$ with $E_i \cap \Q^{ab}=M$, each of which has
degree 3 over $FM$, in addition to the degree 3 central extension $FM'$ with $M' \subseteq
\Q(\zeta_{p^{\infty}})$. 
\par The group $\Eva(FM'/\Q)$ has an abelian
normal subgroup that is not cyclic, and it follows from
\cite[Corollary 6.13]{MR2270898 } that such a group has no faithful primitive representations, so $FM'$ cannot be the fixed field
of $\ker \rho$ (the same argument shows $FM''$ cannot be the fixed field of
$\ker \rho$ for any $M'' \subset \Q(\zeta_{p^{\infty}})$). Since we cannot have a totally
ramified $C_3\times C_3$ extension of $FM$ that is Galois over $\Q$,
we must have either at least one of the $E_i's$ be unramified or have
$FM'/FM$ unramified. If $FM'/FM$ were unramified, ramification considerations force $e: =e(F/F_0)$ to
be 3 or 9, where $F_0$ is the fixed field of the $C_3 \times C_3$
normal subgroup of $P_3$. This is impossible, however, since we would
have $e(F/\Q)=3e$, so the extension of $\Q_p$ given by completing $F$
at a prime above $p$ would have a $C_{3e}$ subgroup, and $P_3$ does not
have such a subgroup. Thus one of the $E_i$'s is unramified over
$FM$. If two of the $E_i$'s were unramified, the composite of these
fields would give a $C_3 \times C_3$ unramified central
extension of $FM$ that contains $FM'$, forcing $FM'/FM$ to be
unramified, which we just saw gives a
contradiction. 
\par Thus, $FM/\Q$ may have up to four
central extensions of degree 3, and at this point, three of them (all but $FM'$) are
possibilities for $E$. By Theorem
\ref{thf},
$E$ is a degree 3 central extension of $FM/\Q$ for some $M
\subset \Q(\zeta_{p^{\infty}})$ since $C_3$ surjects onto
$\Eva(E/FM)$. It remains to show that $E/FM$ must be
unramified. Suppose that $E/FM$ is ramified. Write $[FM:F]=3^{a}b$ with
$3 \nmid b$, and let $M_3 \subset \Q(\zeta_{p^{\infty}})$ be the field
satisfying $[FM_3:F]=3^a$. The extension $FM_3/F$ is totally ramified since $F/F_0$
is unramified. We have $\Eva(FM_3/\Q)\cong (C_3 \times C_3) \rtimes
(Q_8 \rtimes C_{3^{a+1}})$ with the $C_{3^{a+1}}$ factor acting
through its $C_3$ quotient. The $3$-part of the inertia group at
a prime above $p$ in $E$ is cyclic of order $3^{a+2}$. It follows that
$\Eva(E/\Q) \cong  (C_3 \times C_3) \rtimes
(Q_8 \rtimes C_{3^{a+2}})$ with the $C_{3^{a+2}}$ factor acting
through its $C_3$ quotient. The left $(C_3 \times C_3)$ factor is then
an
abelian normal subgroup of $\Eva(E/\Q)$, which contradicts the
primitivity of the representation. 
\end{proof}

\begin{remark}\label{E0}
Since $E/F$ is abelian (it is cyclic) and $E/FM$ is unramified, there exists a field
$E_0$ between $E$ and $F$ such that $E_0/F$ is unramified of degree 3
and $E/E_0$ is totally ramified. Since $FM_3/F$ is totally ramified, $E_0$ cannot be contained in
$FM_3$ (so $E_0 \nsubseteq FM$), and thus we have
$E=E_0M$. Since $E/F$ is cyclic, we also have that $[FM:F]$ is not
divisible by $3$. 
\end{remark}

\begin{remark}\emph{
Unlike in the $P_1$ case, we may not use Lemma \ref{tate} here to guarantee the existence of an
unramified degree 3 central extension of $F$ because the extension of $\Q_p$
given by completing at a prime above $p$ in $F$ is not necessary
cyclic, so we are not assured of a lift $\rho_p:\Eva_{\Q_p}\to
\GL_3(\C)$ of $\tilde{\rho}|_{\Eva_{\Q_p}}$ such that the fixed field
of the kernel of $\rho_p$ is unramified over the fixed field of the
kernel of $\tilde{\rho}|_{\Eva_{\Q_p}}$.}
\end{remark}
Maintaining the notation from Figure \ref{fig'}, let $L=F^{ab}$ be the degree 3 extension of $\Q$ contained in
$\Q(\zeta_p)$. The field $L$ then admits a $C_2 \times C_2$
extension, which is unramified by Lemma \ref{l1}. In the proof of
Proposition \ref{thp3} we saw that
$F/F_0$ must be unramified, so $e(F/\Q)=3\textrm{ or } 6$.

Continuing in the manner of the $P_1$ case, we now bound, for a fixed
$F/\Q$ with $\Eva(F/\Q)\cong P_3$, the number of $\rho$ for which
$N(\rho)=p^m$ and $F_{\rho}=F$. Let $E=E_{\rho}$, so
$E$ is of the form $E_0M$ for some $M \subseteq
\Q(\zeta_{p^{\infty}})$, where $E_0$ is as in Remark \ref{E0}. Given
$M$, by Proposition \ref{thp3} there is one possible choice for $E$. Write
$[M:L]=p^nc$, with $c\mid \frac{p-1}{3}$. We have $e(E/\Q)=3p^ncx$,
where $x=1$ or $2$. Using
Lemma \ref{lemram} to compute
analogously to the $P_1$ case, we find
\[
v_p(N_{\rho})=3+3n\frac{p-1}{3cx}. 
\]

An exhaustive search using \cite{GAP4} shows that there are 3 non-split central
extensions of $P_3$ of order $3|P_3|=648$, and using \cite{GAP4}, we
can check that each of them has 7 irreducible
three-dimensional representations. 
\begin{remark}
We are considering central extensions of $P_3$ by $C_3$ up to group
isomorphism, not up to isomorphism of central extensions, which are
classified by $H^2(P_3, C_3)$. 
\end{remark}
The three-dimensional irreducible representations of $\Eva(E/\Q)$ are
all the $\psi \otimes \chi$, as $\psi$ runs over the three-dimensional
irreducible representations of $\Eva(E_0/\Q)$ and $\chi$ runs over all
characters of $\Eva(E/E_0) \cong \Eva(M/L)$. Thus $\Eva(E/\Q)$ has $7p^nc$ irreducible
three-dimensional representations. We estimate the number $a_{m,F}$ of
representations $\rho$ with $N(\rho)=p^m$ and $F_{\rho}=F$. We must
have $3\mid m$ so write $m=3k+3$. Proceeding as in the $P_1$ case, we
see that the leading term in the expression for
$a_{m,F}$ is
\begin{equation}
\frac{7p^k(p-1)}{3x}. \label{nr3}
\end{equation}

\par As we did with $P_1$, we must now bound the number of fields $F$
with $\Eva(F/\Q)\cong P_3$ and $F/\Q$ ramified only at $p\infty$. As
above, let $L=F^{ab}$, the degree 3 subfield of $\Q(\zeta_p)$. The field $L$ then
 admits a $Q_8$ extension $F_0$, which has 3 intermediate fields
$L_i'$, each of which is a $C_2$ extension of $L$, and unramified over
$L$ by Lemma \ref{l1}. Let $K$ be the composite of the $L_i$. The field $L$ is
totally real, so by the following lemma, $K$ is as well.

\begin{lemma}\label{Q8}
Let $l$ be a totally real field and let $k/l$ be a Galois extension
with group $Q_8$. Then the unique intermediate field that is degree 4
over $l$ is totally real (the field is unique because $Q_8$ has a
unique subgroup of order 2). 
\end{lemma}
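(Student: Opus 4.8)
The plan is to identify the degree-$4$ subfield group-theoretically and then control its archimedean places via complex conjugation. By the Galois correspondence, a subfield $m$ with $[m:l]=4$ corresponds to a subgroup $H\leq Q_8$ with $[Q_8:H]=4$, i.e.\ $|H|=2$. The only subgroup of order $2$ in $Q_8$ is its center $Z(Q_8)=\{\pm 1\}$, which is precisely the asserted uniqueness of $m$; thus $\Eva(k/m)=Z(Q_8)$.

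To show $m$ is totally real, I would verify that every archimedean place of $m$ is real. Each such place lies below a place $\tilde v$ of $k$, which in turn lies above a place $v$ of $l$; since $l$ is totally real, $v$ is real. Fixing an embedding of $k$ into $\C$ inducing $\tilde v$, the associated complex-conjugation automorphism $c\in\Eva(k/l)$ satisfies $c^2=1$, so $c$ has order $1$ or $2$ as an element of $Q_8$ (it generates the decomposition group of $\tilde v$ over the real place $v$).

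The key observation is that $Q_8$ has a \emph{unique} element of order $2$, namely $-1$, and this element is central. Hence in either case $c\in\{1,-1\}=Z(Q_8)=\Eva(k/m)$, so $c$ fixes $m$ pointwise. Consequently the restriction to $m$ of the chosen embedding of $k$ is stable under complex conjugation, i.e.\ it lands in $\R$, so the place of $m$ below $\tilde v$ is real. Since every archimedean place of $m$ arises in this way, $m$ is totally real.

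The only real content is the group-theoretic fact that the unique involution of $Q_8$ lies in the center $\Eva(k/m)$; once this is noted, the decomposition-group description of complex conjugation at an archimedean place forces $c$ to fix $m$, and there is no further obstacle.
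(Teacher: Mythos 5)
Your proof is correct and rests on the same idea as the paper's: the unique involution of $Q_8$ is central, so complex conjugation in any embedding fixes the degree-$4$ subfield, which is exactly the paper's observation that $k\cap\R$ is either $k$ or the unique index-$2$ subfield. Your version is in fact written out more carefully than the paper's one-line argument, since you explicitly run the decomposition-group argument at every archimedean place to get ``totally real'' rather than just ``real at one embedding.''
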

\begin{proof}
Let $k'=k \cap \R$. If $k'=k$, we are done. Otherwise, $[k:k']=2$, and
$k'$ is the unique intermediate field of degree 4 over $l$.
\end{proof}
Returning to the notation before Lemma \ref{Q8}, $K/L$ is unramified and $K$ is totally real, so the
number of choices for $K$ is at most $r_{2,2}(k)$, where $k$ is the
$2$-rank of $\Cl(L)$. Using \cite[Corollary 4]{realcubic}, we have
$|\Cl(L)|<\frac{1}{2}p$, so $k<\log_2{p}-1$. The field $F_0$ is a degree 2
extension of $K$, and the extension $F_0/K$ is at most tamely ramified, so $F_0
\subseteq K^{\mathfrak{p}\infty}$, where $\p$ is the prime of $L$ above
$p$, viewed as an ideal of
$K$. By class field theory, $\p$ splits into a product of 4 primes in $K$ since it
is principal in $L$. Using the exact sequence \eqref{exact} applied to
$\Cl^{\p\infty}(K)$, we find that $\Eva(K^{\p\infty}/K)$ has $2$-rank at
most 4 + 12 + the $2$-rank of $\Cl(K)$. The order of $\Cl(K)$ is at
most $\frac{22.2p^8}{\pi^{12}}$ \cite[page 95]{moonmonomial}. Set $\beta=\log_2(2^{12}\cdot\frac{22.2p^8}{\pi^{12}})$. If $F_0/K$ is
unramified above $p$, there are at most 
\begin{equation}
r_{2,1}(\beta)r_{2,2}(\log_2{p}-1) \label{beta1}
\end{equation}
choices for $F_0$, and
if $F_0/K$ is ramified above $p$, there are at most 
\begin{equation}
\left(r_{2,1}(4+\beta)-r_{2,1}(\beta) \right)r_{2,2}(\log_2{p}-1). \label{beta2}
\end{equation}

\par The final step to bound the number of possible $F_{\rho}$ is to
look at unramified $C_3 \times C_3$ extensions of $F_0$ (which give our field
$F_{\rho}$). This amounts to finding $r_{3,2}(l)$, where $l$ is the
$3$-rank of $\Cl(F_0)$. Using \cite[page 95]{moonmonomial} and the fact that
ramification in the infinite places does not contribute to the $3$-rank, we obtain 
\begin{equation}
l\leq \log_3{\frac{22.2p^{16}}{\pi^{24}}} \label{log1}
\end{equation}
if $F_0/K$ is unramified, and 
\begin{equation}
l \leq \log_3{\frac{22.2p^{20}}{\pi^{24}}} \label{log2}
\end{equation}
if $F_0/K$ is ramified. 

We have $x=1$ or $2$, depending on
whether $F_0/K$ is ramified. To obtain our bound for the number
$a_{m}$ (where $m=3k+3$) of $\rho$ ramified only at $p\infty$ with $\Eva(F_{\rho}/\Q)\cong P_3$ and $N(\rho)=p^m$, we need to
combine \eqref{nr3} with \eqref{beta1} and \eqref{log1}, and  combine
\eqref{nr3} with \eqref{beta2} and \eqref{log2}. Doing so, we find that
the leading term of $a_m$ in $p$, which comes only from the $x=2$
case, is bounded by
\[
\frac{605134.5p^{k+51}}{\pi^{50}}.
\]

Finally, to conclude the section on $P_3$, we find that for the number
$B_m$ of three-dimensional Artin representations with projective image
$P_3$ and conductor dividing $p^m$, we have

\begin{theorem}\label{t5}
Notation as above, $$B_m \lessapprox
\frac{605134.5p^{\frac{m}{3}+50}}{\pi^{50}}.$$
\end{theorem}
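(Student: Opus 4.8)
The plan is to assemble Theorem \ref{t5} by summing the per-field count $a_m$ over all admissible $F_\rho$ with $\Eva(F_\rho/\Q)\cong P_3$, exactly mirroring the bookkeeping already done in the $P_1$ case for Theorem \ref{t4}. First I would recall that for a fixed $F$ the number of representations of conductor dividing $p^m$ with $F_\rho=F$ is governed by the leading term \eqref{nr3}, namely $\tfrac{7p^k(p-1)}{3x}$ with $m=3k+3$, and that summing this geometric-type series over $0\le k\le\frac{m-3}{3}$ produces a contribution of order $p^{m/3}$ (the ramified case $x=2$ dominating, since it gives the larger factor and the larger allowable class-group rank). So the main work is not the per-field count but multiplying by the number of admissible $F$.

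Next I would carry out the field-counting step by following the tower $\Q\subset L\subset K\subset F_0\subset F$ from the section's field diagram. The number of base fields $L$ (the degree-3 subfield of $\Q(\zeta_p)$) is essentially one, so the count is a product of three layers: the $r_{2,2}$ count for the $Q_8$-relevant subfield $K$, the $r_{2,1}$ count for $F_0/K$, and the $r_{3,2}$ count for the unramified $C_3\times C_3$ extension giving $F$. I would substitute the explicit rank bounds already derived: $\log_2 p-1$ for the $2$-rank of $\Cl(L)$, the quantity $\beta$ (and $4+\beta$ in the ramified case) from the ray-class computation over $K$, and the bounds \eqref{log1}/\eqref{log2} on the $3$-rank of $\Cl(F_0)$. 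Combining \eqref{nr3} with \eqref{beta2} and \eqref{log2} for the dominant ramified case gives the stated leading term $\frac{605134.5\,p^{k+51}}{\pi^{50}}$ for $a_m$; the unramified case contributes a strictly lower power of $p$ and so is absorbed into the $\lessapprox$.

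Finally I would sum $a_m$ over $k$ from $0$ to $\frac{m-3}{3}$. Since the dependence on $k$ in the leading term is through $p^{k}$ and the remaining factors $p^{51}$, $\pi^{-50}$ are independent of $k$, the sum is dominated by its top term at $k=\frac{m-3}{3}$, yielding $p^{(m-3)/3+51}=p^{m/3+50}$ up to the constant. This produces $B_m\lessapprox \frac{605134.5\,p^{m/3+50}}{\pi^{50}}$, which is the claim. The role of the $\lessapprox$ notation is precisely to let the lower-order terms (the unramified-$F_0$ branch, the subleading terms in each $a_m$, and the non-leading terms of the $k$-sum) be discarded in the $p\to\infty$ limit.

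The step I expect to be the main obstacle is verifying that the various rank bounds feed through consistently so that the ramified branch genuinely dominates and the exponent lands exactly at $m/3+50$ rather than $m/3+49$ or $m/3+51$. This requires careful tracking of the powers of $p$ contributed by each of $r_{2,2}(\log_2 p-1)$, $r_{2,1}(4+\beta)$, and $r_{3,2}$ of the bound in \eqref{log2}: each $r_{a,b}(x)$ grows like $p$ raised to a multiple of the discriminant exponent inside the logarithm, and I would need to confirm that the discriminant exponents $16$ versus $20$ in \eqref{log1}/\eqref{log2} together with the $\beta$ versus $4+\beta$ difference combine so that the extra factor of $p$ from the ramified branch is not cancelled by a smaller rank elsewhere. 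Getting the constant $605134.5/\pi^{50}$ is then a routine arithmetic consolidation of $7/3$, the $r_{a,b}$ normalizing denominators, and the constant $22.2^{\,\cdot}/\pi^{\,\cdot}$ factors, but the exponent accounting is where an off-by-one error would most plausibly hide.
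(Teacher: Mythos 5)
Your proposal is correct and follows essentially the same route as the paper: the per-field leading term \eqref{nr3} multiplied through the tower counts $r_{2,2}(\log_2 p-1)$, $r_{2,1}(4+\beta)-r_{2,1}(\beta)$, and $r_{3,2}$ of \eqref{log2}, with the ramified branch dominating and the sum over $k$ topping out at $p^{m/3+50}$. One small clarification: the ramified case $x=2$ actually gives the \emph{smaller} per-field factor in \eqref{nr3} (denominator $3x=6$ rather than $3$); it dominates solely because the $3$-rank bound \eqref{log2} contributes $p^{40}$ rather than $p^{32}$ through $r_{3,2}$, which more than compensates.
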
 

\section{Comparison to the two-dimensional case}
For comparison to the two-dimensional case, we look at
\cite{siman}. Looking at Theorem 3 in \cite{siman} and multiplying by $\phi(p^m)=p^{m-1}(p-1)$ to account for all
possible $\chi$, we see that the bounds for the number of two-dimensional representations with conductor $p^m$ and projective image
isomorphic to $A_4$ or $S_4$ (and conjecturally for $A_5$) is on the order of 
$Cp^{2m-\epsilon}\log^4{p^m}$ for $\epsilon=\frac{1}{6},
\frac{1}{8}, \textrm{ or } \frac{1}{12}$ and $C$ a constant, both
$\epsilon$ and $C$ depending on the projective image of the
representation. Taking $p \to \infty$ for fixed $m$, we find that the
bounds in \cite{siman} are better than our bounds for the imprimitive
case. By going through the proofs of Theorems \ref{t4} and \ref{t5} in
this paper, we
may bound the number of $\rho$ with $\Imm \pi \circ
\rho = P_1$, $N(\rho)\mid p^m$, and with   $\Imm \pi \circ
\rho = P_3$, $N(\rho)|p^m$, respectively, as $m \to
\infty$ for fixed $p$. These bounds can be given by easily computable constants
multiplied by the bounds given in Theorems \ref{t4} and
\ref{t5}. Therefore, our bounds as $m \to \infty$ are stronger than
those in \cite{siman}, and our bounds are stronger
as either $p$ or $m$ tends to $ \infty$ for the primitive case. Of course, Wong's bounds
hold for representations of any conductor $N$, not only for
representations of prime power conductor.


\begin{acknowledgements}\label{ackref}
The Author would like to thank his advisor, Joe Silverman, for his
  helpful conversations and Siman Wong for his correspondence
  concerning this work in relation to his own. The author is also appreciative of
  the referee's comments and suggestions, which were detailed and thoughtful.
\end{acknowledgements}


%
%

%
\end{document}